\newcommand{\lra}{\longrightarrow}
\newcommand{\lla}{\longleftarrow}
\newcommand{\Lra}{\Longrightarrow}
\newcommand{\ldual}[1]{\mathord{{\let\nolimits\relax\sideset{^\wedge}{}{#1}}}}
\newcommand{\laction}[2]{\mathord{{\let\nolimits\relax\sideset{^{#1}}{}{#2}}}}
\newcommand{\conj}[2]{\mathord{{\let\nolimits\relax\sideset{^{#1}}{}{#2}}}}
\def\CA{{\mathscr A}}
\def\CC{{\mathscr C}}
\def\CI{{\mathscr I}}
\def\CM{{\mathscr M}}
\def\CV{{\mathscr V}}
\def\CX{{\mathscr X}}
\begin{document}

\title{Skew monoidales, skew warpings and quantum categories}

\author{Stephen Lack and Ross Street}

\address{%\\{3pt}
Department of Mathematics, Macquarie University NSW 2109,
Australia}

\eaddress{steve.lack@mq.edu.au, ross.street@mq.edu.au}

\keywords{bialgebroid; fusion operator; quantum category; monoidal bicategory; monoidale; skew-monoidal category; comonoid; Hopf monad}
\amsclass{18D10; 18D05; 16T15; 17B37; 20G42; 81R50}
\copyrightyear{2012}

\thanks{Both authors gratefully acknowledge the support of 
Australian Research Council Discovery Grant DP1094883; Lack
acknowledges with equal gratitude the support of an Australian
Research Council Future Fellowship.}

% \author{Stephen Lack\footnote{The author gratefully acknowledges the support of an Australian Research Council Discovery Grant DP1094883.}}

% \author{Stephen Lack\footnote{This author gratefully acknowledges the support of an Australian Research Council Future Fellowship and Discovery Grant DP1094883.}\, and and Ross Street\footnote{This author also gratefully acknowledges the support of Australian Research Council Discovery Grant DP1094883.}}

% \date{April 2012}
\maketitle

% \noindent {\small{\emph{2010 Mathematics Subject Classification:} 18D10; 18D05; 16T15; 17B37; 20G42; 81R50}}
% \\
% {\small{\emph{Key words and phrases:} bialgebroid; fusion operator; quantum category; monoidal bicategory; monoidale; skew-monoidal category; comonoid; Hopf monad.}}

\begin{abstract}
\noindent Kornel Szlach\'anyi \cite{Szl2012} recently used the term skew-monoidal category for a particular laxified version of monoidal category. 
He showed that bialgebroids $H$ with base ring $R$ could be characterized in terms of skew-monoidal structures on the category of one-sided $R$-modules for which the lax unit was $R$ itself. 
We define skew monoidales (or skew pseudo-monoids) in any monoidal bicategory $\CM$. 
These are skew-monoidal categories when $\CM$ is $\mathrm{Cat}$. 
Our main results are presented at the level of monoidal bicategories. 
However, a consequence is that quantum categories \cite{QCat} with base comonoid $C$ in a suitably complete braided monoidal category $\CV$ are precisely skew monoidales in $\mathrm{Comod} (\CV)$ with unit coming from the counit of $C$. 
Quantum groupoids (in the sense of \cite{ChLaSt} rather than \cite{QCat}) are those skew monoidales with invertible associativity constraint.
In fact, we provide some very general results connecting opmonoidal monads and skew monoidales. 
We use a lax version of the concept of warping defined in \cite{Tanduoidal} to modify monoidal structures.   

\end{abstract}

\tableofcontents

\section{Introduction}\label{Intro}

To prove coherence for monoidal categories, Mac Lane \cite{Rice} found that five axioms sufficed. Kelly \cite{Kelly1964} reduced these to two axioms. However, the reduction depends critically on invertibility of the associativity and unit constraints. 

A skew-monoidal category in the sense of Szlach\'anyi \cite{Szl2012} is defined in the same way as a monoidal category 
(see \cite{EilKel1966} or \cite{CWM}) except that the associativity and unit constraints need not be invertible. 
Therefore, the directions of these constraints must be specified. 
For a {\em left skew-monoidal category} $\CC$ with tensor product functor 
$\ast : \CC \times \CC \lra \CC$ and unit object $J$, 
the natural families of lax constraints have the directions 
\begin{equation}
\alpha_{XYZ} : (X\ast Y)\ast Z \lra X\ast (Y \ast Z)
\end{equation}
\begin{equation}
\lambda_X : J\ast X \lra X
\end{equation}
\begin{equation}
\rho_X :  X \lra X\ast J
\end{equation}
subject to five conditions: the pentagon for $\alpha$, a condition relating $\alpha_{XJY}$, 
$\lambda_Y$ and $\rho_X$, one relating $\alpha_{JXY}$ and $\lambda$, 
one relating $\alpha_{XYJ}$ and $\rho$, and one relating $\lambda_J$ and $\rho_J$. 
For a  {\em right skew-monoidal category}, the constraints have their directions reversed. 
A left skew-monoidal structure on $\CC$ yields a right one both on $\CC^{\mathrm{op}}$ (in which morphisms are reversed)  
and on $\CC^{\mathrm{rev}}$ (in which the tensor is switched).  

A left skew monoidal category will be called {\em Hopf} when the associativity constraint is invertible. 
The reason for this term should become clear.
It is called {\em left} 
[{\em right}] {\em normal} 
when the left [right] unit constraints are invertible. 
So a monoidal category is precisely a left and right normal Hopf left skew monoidal category.

It was shown in \cite{Szl2003} that the bialgebroids (as considered by \cite{Takeuchi1977, Lu1996, Xu2001, KaSzl2003}) with base ring $R$ amount to cocontinuous opmonoidal monads on the category of two-sided $R$-modules. This was one motivation for \cite{QCat} in defining quantum categories in a braided monoidal category $\CV$ as monoidal comonads in an appropriate place. When $\CV$ is the dual of the category of abelian groups, quantum categories are bialgebroids.   

What is shown in \cite{Szl2012} is that such bialgebroids are closed left skew-monoidal structures on the category of left $R$-modules. This is the motivation for the present paper where we show that quantum categories are skew-monoidal objects, with a certain unit, in an appropriate monoidal bicategory. The Hopf skew-monoidal objects are quantum groupoids.

We present our main theorems in Section~\ref{2BT} at the level of monoidal bicategories. 
We also relate the work to the fusion operators of \cite{Fusion} and the warpings of monoidal structures of \cite{Tanduoidal}. 

\section{Fusion operators, tricocycloids and bimonoids}\label{Fotab}

Let $\CV$ be a braided monoidal category \cite{BTC}. We write as if it were strict monoidal.

A {\em lax fusion operator} on an object $A$ in $\CV$  is a morphism 
$$V:A\otimes A \lra A\otimes A$$ satisfying the fusion equation
\begin{equation}\label{fusioneq}
V_{23}V_{12} = V_{12} V_{13} V_{23}
\end{equation}
where $V_{12}=V\otimes 1_A$, $V_{23}=1_A\otimes V$ and $V_{13}=(c_{A,A}\otimes1_A)(1_A\otimes V)(c_{A,A}\otimes1_A)^{-1}$.
It is a {\em fusion operator} (as defined in \cite{Fusion}) when $V$ is invertible. 

If we put $v=c_{A,A}V$, the proof of Proposition 1.1 of \cite{Fusion} shows that $V$ satisfies \eqref{fusioneq} if and only if $v$ satisfies the 3-cocycle condition 
\begin{equation}\label{3cocycle}
(v\otimes 1_A)(1_A\otimes c_{A,A})(v\otimes 1_A) = (1_A\otimes v)(v\otimes 1_A)(1_A\otimes v) \ .
\end{equation}

A {\em lax tricocycloid} in $\CV$ is defined to be an object $A$ equipped with a morphism $v:A\otimes A \lra A\otimes A$ satisfying \eqref{3cocycle}. 
The lax tricocycloid $(A,v)$ is {\em augmented} when it is equipped with a unit $\eta : I \lra A$ and a counit $\varepsilon : A \lra I$ satisfying the following four conditions:
\begin{equation}
(A\stackrel{1_A\otimes \eta}\lra A\otimes A \stackrel{v}\lra A\otimes A \stackrel{1_A\otimes \varepsilon}\lra A) = (A\stackrel{1_A}\lra A)
\end{equation}
\begin{equation}
(A\otimes A \stackrel{v}\lra A\otimes A \stackrel{\varepsilon \otimes 1_A}\lra A) = (A\otimes A\stackrel{1_A\otimes \varepsilon}\lra A)
\end{equation}
\begin{equation}
(A\stackrel{\eta \otimes 1_A}\lra A\otimes A \stackrel{v}\lra A\otimes A) = (A\stackrel{1_A \otimes \eta}\lra A\otimes A)
\end{equation}
\begin{equation}
(I\stackrel{\eta}\lra A\stackrel{\varepsilon}\lra I)=(I\stackrel{1_I}\lra I) \ .
\end{equation}

\begin{proposition}
Let $\CV$ be a braided monoidal category and let $A$ be an augmented lax tricocycloid. A left skew monoidal structure on $\CV$ with the same unit object $I$ is defined as follows. 
The new tensor product is 
$$X\ast Y = A\otimes X \otimes Y \ ,$$
the associativity constraint $\alpha_{XYZ} : (X\ast Y)\ast Z \lra X\ast (Y\ast Z)$ is
$$\alpha_{XYZ}  = (1_A \otimes c_{A,X} \otimes 1_Y\otimes 1_Z) (v\otimes 1_X \otimes 1_Y \otimes 1_Z) \ ,$$
the left unit constraint $\lambda_X : I\ast X\lra X$ is 
$$\varepsilon \otimes 1_X : A\otimes X \lra X \  , $$
and the right unit constraint $\rho_X : X\lra X \ast I$ is
$$\eta \otimes 1_X : X  \lra A\otimes X \ .$$ 
\end{proposition}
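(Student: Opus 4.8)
The plan is to verify directly that $(\ast,\alpha,\lambda,\rho)$ satisfies the definition of a left skew monoidal category recalled in Section~\ref{Intro}: naturality of the three constraints, and the five coherence axioms. I expect the pentagon for $\alpha$ to amount precisely to the $3$-cocycle equation~\eqref{3cocycle} for $v$, and the remaining four axioms to amount, one for one, to the four defining equations of an augmented lax tricocycloid; the pentagon is the only part that takes real work.

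Naturality is immediate, since $v,\eta,\varepsilon$ are fixed arrows and the braiding $c$ is natural, so $\alpha_{XYZ}$, $\lambda_X=\varepsilon\otimes 1_X$ and $\rho_X=\eta\otimes 1_X$ are natural in each variable; one need only note that $\alpha_{XYZ}$ really does go from $A\otimes A\otimes X\otimes Y\otimes Z$ to $A\otimes X\otimes A\otimes Y\otimes Z$, the braiding $c_{A,X}$ carrying the inner $A$ past $X$. For each of the four unit-type axioms I would cancel the inert $X$- and $Y$-legs and simplify the braidings using naturality of $c$ together with $c_{A,I}=1_A$ and $c_{I,X}=1_X$. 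The axiom relating $\alpha_{IXY}$ and $\lambda$ then reduces to $(\varepsilon\otimes 1_A)\,v=1_A\otimes\varepsilon$; the axiom relating $\alpha_{XYI}$ and $\rho$ reduces (after pushing $\eta$ through the braiding) to $v\,(\eta\otimes 1_A)=1_A\otimes\eta$; the middle-unit axiom relating $\alpha_{XIY}$, $\lambda_Y$ and $\rho_X$ reduces (after cancelling the $c_{A,X}$ of $\alpha_{XIY}$ against the $\varepsilon$ of $\lambda_Y$) to $(1_A\otimes\varepsilon)\,v\,(1_A\otimes\eta)=1_A$; and, since $I\ast I=A$, the axiom relating $\lambda_I$ and $\rho_I$ is exactly $\varepsilon\,\eta=1_I$. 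These four equations are precisely the defining equations of an augmented lax tricocycloid.

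The main obstacle is the pentagon. Writing $((X\ast Y)\ast Z)\ast W=A\otimes A\otimes A\otimes X\otimes Y\otimes Z\otimes W$, each of its five edges acts by a single instance of $v$ on two adjacent $A$-factors followed by braidings that slide $A$-factors rightward past subwords, while $X,Y,Z,W$ travel along passively; removing them by naturality turns the pentagon into an identity between two composites of decorated endomorphisms of $A\otimes A\otimes A$. Expanding $c_{A,\,X\ast Y}=c_{A,\,A\otimes X\otimes Y}$ by the hexagon identities exposes a factor $c_{A,A}$, and once all the resulting braidings are rearranged using naturality of $c$ and the two hexagon axioms, the two legs of the pentagon become the two sides of~\eqref{3cocycle} — the $2$-versus-$3$ count of occurrences of $v$ on the two legs matching that in~\eqref{3cocycle} — which holds because $(A,v)$ is a lax tricocycloid; one could equally run the reduction through the fusion equation~\eqref{fusioneq} for $V=c_{A,A}^{-1}v$. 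The delicate part is the bookkeeping: one must check that each braiding transports the $A$-factor it is meant to, and in the right direction, so that the ambient braidings assemble into exactly the $1_A\otimes c_{A,A}$ inside the left-hand side of~\eqref{3cocycle} (equivalently, into the conjugating factors $(c_{A,A}\otimes 1_A)^{\pm1}$ defining $V_{13}$) and not into some neighbouring but inequivalent expression.
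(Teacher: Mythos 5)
Your proposal is correct and follows essentially the same route as the paper's proof: the pentagon is reduced to the $3$-cocycle condition \eqref{3cocycle} (the paper simply delegates this to the proof of Proposition 2.1 of \cite{Fusion}), and the remaining four axioms are matched one-for-one with the four augmentation equations, exactly as you identify them. Your explicit reductions of the unit axioms (using $c_{A,I}=1_A$, $c_{I,X}=1_X$ and naturality of the braiding) are the details the paper leaves implicit, and they check out.
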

\begin{proof} 
The proof of Proposition 2.1 of \cite{Fusion} shows that $\alpha$ satisfies the pentagon. The other axioms follow, one by one, from the four axioms on an augmentation. 
\end{proof}

\begin{remark} 
In the terminology to be introduced in Section~\ref{Smon}, 
an augmented lax tricocycloid is precisely a left skew 
monoidal structure on the unit monoid $I$, in the monoidal bicategory 
$\mathrm{Mod}(\CV)$ of monoids in $\CV$ and two-sided modules between them, 
where the unit is $I$ acting on itself on both sides. However, we will now see that it is also something perhaps more familiar.  
\end{remark}  

\begin{proposition}
Let $\CV$ be a braided monoidal category. 
There is a bijection between bimonoid structures on an object $A$, with respect to the inverse braiding, and augmented lax tricocycloid structures on $A$. The bijection is defined by 
$$v=(A\otimes A \stackrel{\delta \otimes 1_A} \lra A\otimes A\otimes A 
\stackrel{1_A\otimes \mu} 
\lra A\otimes A \stackrel{c_{A,A}} \lra A\otimes A)$$ 
and the inverse is defined by
$$\mu=(A\otimes A \stackrel{v} \lra A\otimes A \stackrel{1_A\otimes \varepsilon} \lra A)$$
$$\delta = (A\stackrel{1_A \otimes \eta} \lra A\otimes A \stackrel{v} \lra A\otimes A \stackrel{c_{A,A}^{-1}} \lra A\otimes A) \ ,$$
where, of course, the unit and counit of the bimonoid give the augmentation of the lax tricocycloid.
The bimonoid is Hopf if and only if the operator $v$ of the corresponding lax tricocycloid is invertible. 
\end{proposition}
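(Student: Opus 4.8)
The plan is to fix the augmentation data $\eta,\varepsilon$ and show that the two displayed formulas set up mutually inverse bijections on the set of morphisms $A\otimes A\to A\otimes A$, each carrying the defining axioms of one structure to those of the other; the Hopf clause will then follow from the classical characterisation of Hopf bimonoids. First I would pass from a bimonoid $(A,\mu,\eta,\delta,\varepsilon)$ (with respect to $c^{-1}$) to a lax tricocycloid. Put $V=(1_A\otimes\mu)(\delta\otimes1_A)$, so $v=c_{A,A}V$. By the equivalence recalled before the definition of lax tricocycloid (the proof of Proposition~1.1 of \cite{Fusion}), the $3$-cocycle condition \eqref{3cocycle} for $v$ holds iff $V$ satisfies the fusion equation \eqref{fusioneq}, and the latter is the usual string-diagram consequence of associativity of $\mu$, coassociativity of $\delta$, and the bimonoid compatibility; it is exactly here that the compatibility must be taken with respect to $c^{-1}$, the occurrence of $c$ rather than $c^{-1}$ in the definition of $v$ (hence in \eqref{3cocycle}) pinning down the convention. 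This is the computation behind Proposition~2.1 of \cite{Fusion}. The four augmentation axioms then fall out one at a time: the first from the unit law for $\mu$ and the counit law for $\delta$; the second from naturality of $c$, the fact that $\varepsilon$ is a monoid morphism, and the counit law; the third from naturality of $c$, the fact that $\eta$ is a comonoid morphism, and the unit law; and the fourth, $\varepsilon\eta=1_I$, directly.

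Conversely, given an augmented lax tricocycloid $(A,v,\eta,\varepsilon)$, I would define $\mu=(1_A\otimes\varepsilon)v$ and $\delta=c_{A,A}^{-1}v(1_A\otimes\eta)$, keeping $\eta,\varepsilon$, and verify the bimonoid axioms. Associativity of $\mu$ and coassociativity of $\delta$ come out of \eqref{3cocycle} by composing with suitable copies of $\eta$ and $\varepsilon$ in the outer legs and collapsing the resulting redundant strands by the augmentation axioms; the bimonoid compatibility (with respect to $c^{-1}$) likewise comes out of the full equation \eqref{3cocycle}; and the unit, counit, and morphism properties of $\eta,\varepsilon$ are immediate from the augmentation axioms. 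These are the same diagrammatic manipulations that prove Propositions~1.1 and~2.1 of \cite{Fusion}, now read in reverse.

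It remains to see that the two constructions are mutually inverse. Starting from a bimonoid, one round trip is immediate: $(1_A\otimes\varepsilon)v=(\varepsilon\otimes1_A)V=\mu$ by the counit law, and $c_{A,A}^{-1}v(1_A\otimes\eta)=V(1_A\otimes\eta)=\delta$ by the unit law. The other round trip amounts to the single identity
\[
c_{A,A}\bigl(1_A\otimes((1_A\otimes\varepsilon)v)\bigr)\bigl((c_{A,A}^{-1}v(1_A\otimes\eta))\otimes1_A\bigr)=v,
\]
which I expect to be the main obstacle: it is the one place where one cannot simply invert $v$, and must instead slide the two braidings past each other and then reassemble $v$ using the augmentation axioms $(\varepsilon\otimes1_A)v=1_A\otimes\varepsilon$ and $v(\eta\otimes1_A)=1_A\otimes\eta$ together with \eqref{3cocycle}, the tricocycloid and augmentation equations playing the role of partial inverses for $v$.

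Finally, for the Hopf clause, $v$ is invertible iff $V=c_{A,A}^{-1}v$ is, since $c_{A,A}$ is invertible. When $A$ is a bimonoid, $V=(1_A\otimes\mu)(\delta\otimes1_A)$ is its left fundamental operator, and a bimonoid in a braided monoidal category admits an antipode — is Hopf — precisely when this operator is invertible, the antipode being recovered from $V^{-1}$ by composing with the unit and the counit (as in \cite{Fusion}). Hence the bimonoid is Hopf if and only if $v$ is invertible.
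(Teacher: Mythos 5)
Your proposal is correct and follows essentially the same route as the paper, which simply delegates the direct string-diagram verification to Propositions 1.1, 1.2 and 2.1 of \cite{Fusion} and Proposition 10 of \cite{Flocks}; your axiom-by-axiom matching and the identification of the one nontrivial round-trip identity are accurate. For the record, that identity $c(1\otimes\mu)(\delta\otimes1)=v$ does go through exactly as you anticipate: writing $V=c^{-1}v$, one applies the fusion equation $V_{23}V_{12}=V_{12}V_{13}V_{23}$ and then collapses $V_{23}$, $V_{13}$ and $V_{12}$ against the inserted $\eta$ and $\varepsilon$ using the second, third and fourth augmentation axioms together with naturality of the braiding.
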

\begin{proof}
This is quite an enjoyable exercise with the string diagrams of \cite{GTC}. Part of it is done in the proof of Proposition 1.2 of \cite{Fusion}. The last sentence of the Proposition, with string diagrammatic proof, can be found as Proposition 10 of \cite{Flocks}. 
\end{proof}

\section{Skew warpings}\label{Swar}

The concept of warping for a monoidal category was introduced at the end of \cite{Tanduoidal} as a way of modifying the monoidal structure. We now adapt that idea to skew monoidal categories. A possible connection between warpings and skew monoidal categories was mentioned already by Szlach\'anyi \cite{Szl2012}. 

Let $\CA$ denote a left skew monoidal category. A {\em skew left warping} of $\CA$ consists of the following data:
\begin{itemize}
\item[(a)] a functor $T:\CA \lra \CA$;
\item[(b)] an object $K$ of $\CA$; 
\item[(c)] a natural family of morphisms $v_{A,B} : T(TA\otimes B)\lra TA\otimes TB$ in $\CA$;
\item[(d)] a morphism $v_0 : TK \lra I$; and,
\item[(e)] a natural family of morphisms $k_A : A \lra TA\otimes K$;
\end{itemize}
such that the following five diagrams commute.
\begin{equation}\label{warpassoc}
\begin{aligned}
\xymatrix{
T(TA\otimes B)\otimes TC \ar[rr]^-{v_{A,B}\otimes 1} && (TA\otimes TB)\otimes TC \ar[d]^-{\alpha_{TA,TB,TC}} \\
T(T(TA\otimes B)\otimes C) \ar[u]^-{v_{TA\otimes B , C}} \ar[d]_-{T(v_{A,B}\otimes 1)} && TA\otimes (TB\otimes TC) \\
T((TA\otimes TB)\otimes C) \ar[dr]_-{T\alpha_{TA,TB,C}\phantom{AA}} && TA\otimes T(TB\otimes C) \ar[u]_-{1\otimes v_{B,C}} \\
& T(TA\otimes (TB\otimes C)) \ar[ru]_-{\phantom{AA}v_{A,TB\otimes C}} &}
\end{aligned}
\end{equation}
\begin{equation}\label{warpunit1}
\begin{aligned}
\xymatrix{
& TK\otimes TB \ar[rd]^-{ v_0 \otimes 1_{TB}}  & \\
T(TK\otimes B) \ar[ru]^-{v_{K,B}} \ar[d]_-{T(v_0\otimes 1_B)} & & I\otimes TB \ar[d]^-{\lambda_{TB}} \\
T(I\otimes B) \ar[rr]_-{T\lambda_B} & & TB }
\end{aligned}
\end{equation}
\begin{equation}\label{warpunit2}
\begin{aligned}
\xymatrix{
T(TA\otimes K)  \ar[rr]^-{v_{A,K}} && TA\otimes TK \ar[d]^-{1\otimes v_{0}} \\
TA \ar[u]^-{Tk_{A}}  \ar[rr]_{\rho_{TA}} && TA\otimes I 
}\end{aligned}
\end{equation}
\begin{equation}\label{warpunit3}
\begin{aligned}
\xymatrix{
T(TA\otimes B)\otimes K  \ar[rr]^-{v_{A,B}\otimes 1_K} && (TA\otimes TB)\otimes K \ar[d]^-{\alpha_{TA,TB,K}} \\
TA\otimes B \ar[u]^-{k_{TA\otimes B}}  \ar[rr]_{1_{TA}\otimes k_B} && TA\otimes (TB\otimes K) }
\end{aligned}
\end{equation}
\begin{equation}\label{warpunit4}
\begin{aligned}
\xymatrix{
TK\otimes K \ar[rr]^-{v_0\otimes1_K}  && I\otimes K \ar[d]^-{\lambda_K} \\
K \ar[rr]_-{1_K} \ar[u]^-{k_K} && K}
\end{aligned}
\end{equation}

A skew left warping is called {\em Hopf} when each $v_{A,B}$ is invertible. 

\begin{example}\label{auglaxtoswarp}
An augmented lax tricocycloid $(A,v,\eta,\varepsilon)$ in a braided monoidal category $\CV$ determines a skew left warping on $\CV$ with $TX=A\otimes X$, $K=I$, 
$$v_{X,Y} = (A\otimes A \otimes X \otimes Y \stackrel{v\otimes 1_X \otimes 1_Y} \lra A\otimes A \otimes X \otimes Y \stackrel{1_A\otimes c_{A,X} \otimes 1_Y}\lra A\otimes X \otimes A \otimes Y) \ ,$$
$$v_0 = \varepsilon : A\lra I,~\quad\textnormal{and}\quad~k_X = \eta \otimes 1_X : X \lra A\otimes X.$$
\end{example}

\begin{example}\label{DualgivesSWarp}
Suppose $\CA$ is a monoidal category and $K$ is an object in it which has a right dual $R$ with unit $\eta : I \lra R\otimes K$ and counit $\varepsilon : K\otimes R \lra I$. 
A Hopf skew left warping on $\CA$ is defined by 
$TA = A\otimes R$, 
$K = K$, 
$v_{A,B} = 1_{A\otimes R \otimes B \otimes R} $,
$v_0 = \varepsilon$ and $k_A = 1_A \otimes \eta$.   
\end{example}

The definitions of monoidal functor, opmonoidal functor, monoidal natural transformation and opmonoidal natural transformation can be made verbatim for left skew monoidal categories. For example, if $\CA$ and $\CX$ are left skew monoidal categories, an {\em opmonoidal functor} $T:\CA \lra \CX$ is equipped with a natural family of morphisms $\psi_{A,B} : T(A\otimes B) \lra TA\otimes TB$ and a morphism $\psi_0 : TI \lra I$ satisfying the usual three axioms but keeping in mind that the constraints in $\CA$ and $\CX$ are not necessarily invertible. The concept of opmonoidal monad on a left skew-monoidal category is therefore clear. 

\begin{remark}\label{rev} 
A monoidal category $\CA$ is both left and right skew-monoidal by taking the inverse constraints. 
An opmonoidal monad $T$ on $\CA$ is the same as an opmonoidal monad on the monoidal category $\CA^{\mathrm{rev}}$ (which is $\CA$ with reversed tensor product).   
\end{remark}
 
\begin{example}\label{warpfromopmonmonad}
Suppose $T$ is an opmonoidal monad on a left skew monoidal category $\CA$. 
A skew left warping on $\CA$ is defined by 
$T = T$, 
$K = I$, 
$$v_{A,B} = (T(TA\otimes B) \stackrel{\psi_{TA,B}}\lra TTA\otimes TB\stackrel{\mu_A\otimes 1_{TB}}\lra TA\otimes TB) \ , $$
$$v_0 = \psi_0 : TI\lra I$$ and 
$$k_A = (A\stackrel{\eta_A}\lra TA \stackrel{\rho_{TA}} \lra TA\otimes I) \ .$$
This example has a converse as expressed by Proposition~\ref{opmonmonadfromwarp} below.   
\end{example}

The next Proposition contains Remark 2.7 of \cite{BLV2011} concerning the special case where $\CA$ is monoidal; also see \cite{Sttalk}.

\begin{proposition}\label{opmonmonadfromwarp}
Suppose the left skew monoidal category $\CA$ is right normal. Then the construction of Example~\ref{warpfromopmonmonad} determines a bijection between opmonoidal monads on $\CA$ and skew left warpings on $\CA$ for which $K=I$. 
\end{proposition}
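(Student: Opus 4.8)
The plan is to exhibit explicit constructions in both directions and verify they are mutually inverse. In one direction we already have Example~\ref{warpfromopmonmonad}, which sends an opmonoidal monad $(T,\mu,\eta,\psi,\psi_0)$ to a skew left warping with $K=I$. For the reverse direction, start from a skew left warping $(T,I,v,v_0,k)$ with $K=I$. The monad multiplication must be recovered from $v$; inspecting the formula in Example~\ref{warpfromopmonmonad}, the composite $T(TA\otimes I)\xrightarrow{v_{A,I}} TA\otimes TI\xrightarrow{1\otimes\psi_0}TA\otimes I$ together with right normality (so $\rho$ is invertible) should let us solve for $\mu_A$. Concretely I would define $\psi_0:=v_0:TI\to I$, let $\eta$ be given (a warping does not supply the monad unit directly, so here I must think: in fact the unit $\eta_A:A\to TA$ should be extracted from $k_A:A\to TA\otimes I$ via $\rho_{TA}^{-1}$, since $k_A=\rho_{TA}\eta_A$ in the forward construction), and define $\mu_A$ from $v$ by precomposing with a suitable unit isomorphism. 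The opmonoidal structure $\psi_{A,B}:T(A\otimes B)\to TA\otimes TB$ is then forced: in the forward direction $v_{A,B}=(\mu_A\otimes 1)\psi_{TA,B}$, so $\psi_{TA,B}=(\mu_A^{-1}\otimes 1)v_{A,B}$ does not typecheck, but using the Kleisli/EM adjunction we can recover $\psi$ on all objects from its values on free algebras together with $\eta$ and $\mu$; alternatively, and more cleanly, set $\psi_{A,B}:=(1\otimes\text{(something)})\circ v_{\text{appropriate},\text{?}}\circ T(\eta_A\otimes 1_B)$ and check naturality.

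The key steps, in order, would be: (i) from a warping, define $\eta_A:=\rho_{TA}^{-1}k_A$, $\psi_0:=v_0$, and $\mu_A$ as the composite $TTA\xrightarrow{\cong}T(TA\otimes I)\xrightarrow{v_{A,I}}TA\otimes TI\xrightarrow{1\otimes\psi_0}TA\otimes I\xrightarrow{\rho_{TA}^{-1}}TA$, using right normality; (ii) define $\psi_{A,B}$ using $\eta$ to reduce to the free case, e.g.\ $\psi_{A,B}:=$ (the composite obtained by applying $v_{A,B}$ after $T(\eta_A\otimes 1_B)$ and then massaging with $\mu$); (iii) verify the monad axioms (associativity and unit laws for $\mu,\eta$) — associativity of $\mu$ should come from the pentagon-like diagram \eqref{warpassoc}, the unit laws from \eqref{warpunit2} and \eqref{warpunit3}; (iv) verify the three opmonoidal-functor axioms and the two opmonoidal-monad compatibility axioms for $\psi,\psi_0$ with $\mu,\eta$ — these should follow from \eqref{warpassoc}, \eqref{warpunit1}, \eqref{warpunit4} together with naturality; (v) check the two round-trips compose to the identity, which is largely a matter of unwinding definitions and cancelling the inserted unit isomorphisms, again invoking right normality.

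The main obstacle I anticipate is step (ii)–(iii): reconstructing the full opmonoidal structure $\psi_{A,B}$ on arbitrary objects from the warping datum $v_{A,B}$, which only mentions $T(TA\otimes B)$, i.e.\ free algebras in the first variable. The forward construction "loses" information in the sense that $v$ only sees $\psi$ precomposed with $\mu$; recovering $\psi$ requires genuinely using that $T$ is a monad, so the argument is necessarily intertwined — one cannot first build $T$ as an opmonoidal functor and then separately check it is a monad, but must set up $\mu$, $\eta$ and $\psi$ together and verify the axioms simultaneously. Making the bookkeeping of the various instances of $\rho^{-1}$ transparent (right normality is used repeatedly) is where care is needed; I would organize this by first treating the strict/strong case as in Remark~2.7 of \cite{BLV2011} to fix the pattern, then observe that every step goes through when the only invertible constraint available is $\rho$, since $\lambda$ and $\alpha$ never need to be inverted in these particular diagrams.
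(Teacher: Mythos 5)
Your proposal follows essentially the same route as the paper's proof: one defines $\eta_A=\rho_{TA}^{-1}k_A$, $\psi_0=v_0$, and $\mu_A$ as the composite $TTA\xrightarrow{T\rho_{TA}}T(TA\otimes I)\xrightarrow{v_{A,I}}TA\otimes TI\xrightarrow{1\otimes v_0}TA\otimes I\xrightarrow{\rho_{TA}^{-1}}TA$, exactly as you propose. The only place you over-complicate is $\psi_{A,B}$, which is simply $v_{A,B}\circ T(\eta_A\otimes 1_B)$ with no further massaging by $\mu$ --- the monad unit law $\mu_A\circ T\eta_A=1$ is what makes the round-trip the identity --- so the obstacle you anticipate in steps (ii)--(iii) dissolves.
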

\begin{proof}
Given a skew left warping with $K=I$, we obtain a monad structure on $T$ by taking the multiplication $\mu$ to be the composite 
$$TTA \stackrel{T\rho_{TA}}\lra T(TA\otimes I)\stackrel{v_{A,I}}\lra TA\otimes TI 
\stackrel{1_{TA}\otimes v_0}\lra TA\otimes I\stackrel{\rho_{TA}^{-1}}\lra TA$$
and the unit $\eta_A$ to be 
$$A\stackrel{k_A}\lra TA\otimes I\stackrel{\rho_{TA}^{-1}}\lra TA \ .$$
For the opmonoidal structure, we take $\psi_{A,B}$ to be
$$T(A\otimes B)\stackrel{T(\eta_A\otimes 1_B)}\lra T(TA\otimes B)\stackrel{v_{A,B}}\lra TA\otimes TB$$ 
and $\psi_0$ to be $v_0:TI\lra I$. 
\end{proof}

The next result is easily proved.

\begin{proposition}\label{newskew}
A skew left warping of a left skew monoidal category $\CA$ determines another left skew monoidal structure on $\CA$ as follows:
\begin{itemize}
\item[(a)] tensor product functor $A\ast B = TA\otimes B$;
\item[(b)] unit $K$;
\item[(c)] associativity constraint $$T(TA\otimes B)\otimes C \stackrel{v_{A,B}\otimes 1_C}\lra (TA\otimes TB) \otimes C \stackrel{\alpha_{TA,TB,C}}\lra TA\otimes (TB\otimes C) \ ;$$
\item[(d)] left unit constraint $$TK\otimes B\stackrel{v_0\otimes 1_B}\lra I\otimes B \stackrel{\lambda_B}\lra B \ ;$$
\item[(e)] right unit constraint $$A\stackrel{k_A} \lra TA\otimes K \ .$$
\end{itemize}
There is an opmonoidal functor $(T, v_0 , v_{A,B}) : (\CA , \ast , K) \lra (\CA , \otimes , I)$. 
\end{proposition}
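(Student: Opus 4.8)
The plan is to verify, in order, the pentagon and the four unit axioms for the proposed left skew monoidal structure $(\CA,\ast,K)$, in each case rewriting the axiom in terms of the given warping data and recognizing it as one of the five warping diagrams \eqref{warpassoc}--\eqref{warpunit4} together with the skew monoidal axioms already holding for $(\CA,\otimes,I)$ and naturality of $\alpha,\lambda,\rho,v$. For the pentagon, the outer boundary involves five instances of the new $\alpha$; expanding each as $\alpha_{T-,T-,-}\circ(v\otimes 1)$ and then using naturality of $v$ to slide the various $v_{A,B}$ past the structural maps, the diagram decomposes into the pentagon for the old $\alpha$ (applied to $TA,TB,TC$) pasted with diagram \eqref{warpassoc}; this is the step I expect to be the genuine bookkeeping, since one has to track three separate $v$-cells and a $v_{TA\otimes B,C}$ coming from the $T$-image of an associator, and make sure the instance of \eqref{warpassoc} that appears is exactly the right one.

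Next I would handle the three ``mixed'' unit axioms. The axiom relating $\alpha_{A,K,B}$, $\lambda_B$ and $\rho_A$ (in the new structure: $(A\ast K)\ast B \to A\ast(K\ast B)\to A\ast B$ equals $\rho_A\ast 1_B$ followed by $\lambda$ on the middle, or whatever the precise triangle is) unwinds, after substituting the definitions of the new $\alpha$, $\lambda$, $\rho$, into diagram \eqref{warpunit1} pasted with naturality of $\alpha$ and $\lambda$ and a triangle axiom for the old structure. The axiom relating $\alpha_{K,A,B}$ and $\lambda$ reduces similarly to \eqref{warpunit2}-type data — actually to the interaction of $v_0$ with $v_{A,B}$ — together with naturality; and the axiom relating $\alpha_{A,B,K}$ and $\rho$ reduces to \eqref{warpunit3} pasted with naturality of $k$ and $\alpha$. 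Finally the axiom relating $\lambda_K$ and $\rho_K$ is, almost verbatim after substitution, diagram \eqref{warpunit4}. Throughout, the only non-formal inputs are the five warping diagrams and the five axioms for $(\CA,\otimes,I)$.

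For the last sentence — that $(T,v_0,v_{A,B})$ is an opmonoidal functor $(\CA,\ast,K)\to(\CA,\otimes,I)$ — I would observe that the opmonoidal structure maps are exactly $\psi_{A,B}\colon T(A\ast B)=T(TA\otimes B)\xrightarrow{v_{A,B}} TA\otimes TB$ and $\psi_0\colon TK\xrightarrow{v_0} I$. The associativity coherence for an opmonoidal functor between skew monoidal categories asserts the commutativity of a hexagon relating $T\alpha^{\ast}$, $\alpha^{\otimes}$, and two instances each of $\psi$; substituting the definition of $\alpha^{\ast}$ from clause (c) and of $\psi$, and using naturality of $v$ to move the inner $v_{A,B}$ outward, this hexagon becomes precisely diagram \eqref{warpassoc}. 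Likewise the left-unit coherence $T(\lambda^{\ast})$ vs.\ $\lambda^{\otimes}\circ(\psi_0\otimes 1)$ becomes \eqref{warpunit1}, and the right-unit coherence $\rho^{\otimes}_{TA}$ vs.\ $(1\otimes\psi_0)\circ\psi_{A,K}\circ T\rho^{\ast}_A = (1\otimes v_0)\circ v_{A,K}\circ Tk_A$ becomes \eqref{warpunit2}. So the opmonoidal functor axioms are literally three of the five warping diagrams, and nothing further is needed.

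The main obstacle is purely organizational rather than conceptual: assembling the pentagon for the new $\alpha$ requires a fairly large pasting diagram in which one must correctly place the single instance of \eqref{warpassoc} among several naturality squares for $v$ and one copy of the old pentagon applied to $T A, TB, TC$, being careful that the edge $v_{TA\otimes B,C}$ produced by $T$ applied to an old associator lines up with the corresponding edge of \eqref{warpassoc}. Once that pasting is drawn, the remaining four axioms are short, and the opmonoidal functor claim is immediate from the identifications above; this is why the Proposition is stated as ``easily proved.''
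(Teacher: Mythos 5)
The paper offers no written proof of this Proposition (it is simply declared ``easily proved''), and your verification is exactly the intended routine check: each of the five new axioms reduces, via naturality of $\alpha$ and one axiom of $(\CA,\otimes,I)$, to a single warping diagram, and the three opmonoidal-functor axioms for $(T,v_0,v_{A,B})$ are literally \eqref{warpassoc}, \eqref{warpunit1} and \eqref{warpunit2}. Your only slips are cross-labellings that the actual computation would immediately correct: the axiom relating $\alpha_{A,K,B}$, $\lambda_B$ and $\rho_A$ uses \eqref{warpunit2} (the $Tk_A$--$v_{A,K}$--$(1\otimes v_0)$--$\rho_{TA}$ square), not \eqref{warpunit1}, whereas the axiom relating $\alpha_{K,A,B}$ and $\lambda$ uses \eqref{warpunit1} (which is indeed the ``interaction of $v_0$ with $v$'' you describe); and the instance of the old pentagon needed is the one for the four objects $TA,TB,TC,D$, pasted with \eqref{warpassoc}$\otimes 1_D$.
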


\begin{example}\label{newfromdual} 
Applying Proposition~\ref{newskew} to Example~\ref{DualgivesSWarp}, we obtain a Hopf left skew-monoidal structure on any monoidal category $\CA$ from a duality $K\dashv R$. 
The new tensor product is defined by $A\ast B = A\otimes R\otimes B$. We denote $\CA$ equipped with this left skew-monoidal structure by $\CA_K$. 
A bicategorical version of this simple idea will be of some interest below.   
\end{example}

\begin{example}\label{revopmonmonad} 
By Remark~\ref{rev}, we see that an opmonoidal monad on a monoidal category $\CA$ gives rise to both a left and right skew-monoidal structure on $\CA$; the left tensor product of $A$ and $B$ is $TA\otimes B$ while the right is $A\otimes TB$; the unit is $I$ as in $\CA$ in both cases. 
\end{example}

\section{Skew monoidales}\label{Smon}

Let $\CM$ be a monoidal bicategory \cite{MbHa}, which is the 
natural context in which monoidales (or pseudomonoids) can 
be studied \cite{GPS, MbHa, Lack2000}. 
Mostly, we write as if it were a Gray monoid \cite{GPS, MbHa}.

A {\em left skew-monoidal} structure on an object $C$ in $\CM$ 
consists of morphisms \hbox{$p:C\otimes C \lra C$}, $j:I\lra C$, respectively called the {\em tensor product} and {\em unit}, and 2-cells 
\begin{equation}\label{smonoidale1}
\begin{aligned}
\xymatrix{
& C\otimes (C\otimes C) \ar[rr]^-{1\otimes p}_<<<<<<<<<<<{\phantom{AA}}="1" && C\otimes C \ar[dr]^-{p}& \\ 
(C\otimes C)\otimes C \ar[ru]^-{a_{C,C,C}} \ar[rr]_-{p\otimes 1} \ar@{}[rrrr]^<<<<<<<<<<<<<<<<<<<<<<<<<<<<<<<<<<<<<<<<<<<{\phantom{AA}}="2" \ar@{<=}"1";"2"^-{\phantom{a}\alpha} && C\otimes C \ar[rr]_-{p} && C\,\,  \\ }
\end{aligned}
\end{equation}
\begin{equation}\label{smonoidale2}
\begin{aligned}
\xymatrix{
C\otimes C \ar[rd]_{p}^(0.5){\phantom{aa}}="1"   && I\otimes C \ar[ll]_{j\otimes 1}  \ar[ld]^{\ell}_(0.5){\phantom{aa}}="2" \ar@{=>}"1";"2"^-{\lambda}
\\
& C }
\end{aligned}
\end{equation}
\begin{equation}\label{smonoidale3}
\begin{aligned}
\xymatrix{
C \ar[d]_{1}^(0.5){\phantom{aaaaa}}="1" \ar[rr]^{r}  && C\otimes I \ar[d]^{1\otimes j}_(0.5){\phantom{aaaaa}}="2" \ar@{=>}"1";"2"^-{\rho}
\\
C  && C\otimes C \ar[ll]^-{p} & , }
\end{aligned}
\end{equation}
respectively called the {\em associativity}, {\em left unit} and {\em right unit constraints}, satisfying the following five conditions.
\begin{equation}\label{smonoidaleax1}
\begin{aligned}
\xymatrix{
& p(p\otimes 1)(1\otimes p\otimes 1)  \ar[rd]^-{\phantom{A}\alpha (1\otimes p\otimes 1)}  & \\
p(p\otimes 1)(p\otimes 1\otimes 1) \ar[ru]^-{p(\alpha \otimes 1)\phantom{A}} \ar[d]_-{\alpha(p\otimes 1 \otimes 1)} & & p(1\otimes p)(1\otimes p\otimes 1) \ar[d]^-{p(1\otimes \alpha)} \\
p(1\otimes p)(p\otimes 1\otimes 1) \ar[rd]_-{\cong} & & p(1\otimes p)(1\otimes 1\otimes p)  \\
& p(p\otimes 1)(1\otimes 1\otimes p)  \ar[ru]_-{\alpha(1\otimes 1 \otimes p)} }
\end{aligned}
\end{equation}

\begin{equation}\label{smonoidaleax2}
\begin{aligned}
\xymatrix{
p(p\otimes 1)(1\otimes j \otimes 1) \ar[rr]^-{\alpha (1\otimes j \otimes 1)}  && p(1\otimes p)(1\otimes j \otimes 1) \ar[d]^-{p(1\otimes \lambda)} \\
p \ar[rr]_-{1} \ar[u]^-{p(\rho \otimes 1)} && p}
\end{aligned}
\end{equation}

\begin{equation}\label{smonoidaleax3}
\begin{aligned}
\xymatrix{
p(p\otimes 1)(j\otimes 1 \otimes 1) \ar[rr]^-{p(\lambda \otimes 1)} \ar[d]_-{\alpha (j\otimes 1 \otimes 1) } && p \\
p(1\otimes p)(j\otimes 1 \otimes 1) \ar[rr]_-{\cong} && p(j\otimes 1) p  \ar[u]_-{\lambda p} }
\end{aligned}
\end{equation}

\begin{equation}\label{smonoidaleax4}
\begin{aligned}
\xymatrix{
p \ar[rr]^-{p(1 \otimes \rho)} \ar[d]_-{\rho p} && p(1\otimes p)(1\otimes 1 \otimes j)  \\
p(1\otimes j) p \ar[rr]_-{\cong} && p(p\otimes 1)(1\otimes 1 \otimes j) \ar[u]_-{\alpha (1\otimes 1 \otimes j) } }
\end{aligned}
\end{equation}

\begin{equation}\label{smonoidaleax5}
\begin{aligned}
\xymatrix{
p(1\otimes j)j \ar[rr]^-{\cong}  && p(j\otimes 1)j \ar[d]^-{\lambda j} \\
j \ar[rr]_-{1} \ar[u]^-{\rho j} && j}
\end{aligned}
\end{equation}

An object $C$ of $\CM$ equipped with a left skew-monoidal structure is called a {\em left skew monoidale} in $\CM$.

\begin{remark}\label{SMonBicat}
The microcosm principle applies here. There is a concept of {\em left skew-monoidal bicategory}. Indeed, the definition of tricategory in \cite{GPS} has the constraints precisely in the required directions. What we want here is the one object case. So a left skew-monoidal structure on a bicategory $\CM$ has a pseudofunctor $\otimes : \CM \times \CM \lra \CM$ and unit object $I$ with pseudonatural constraints
$$
a_{XYZ} : (X\otimes Y)\otimes Z \lra X\otimes (Y \otimes Z)
$$
$$
\ell_X : I\otimes X \lra X
$$
$$
r_X :  X\lra X\otimes I \ .
$$
Where there were five axioms for a left skew-monoidal category, there are now higher-order constraints which we take to be invertible modifications in the diagrams for those axioms. There are axioms on these modifications essentially as set out in \cite{GPS}. (There is presumably an even more skew version of monoidal bicategory where these modifications are not required to be invertible but as yet we have no need for that generality.)

The point we wish to make is that {\em left skew monoidale} makes sense in any left skew-monoidal bicategory. Indeed, the reason for writing the structures \eqref{smonoidale1}, \eqref{smonoidale2} and \eqref{smonoidale3} the way we have is to show this level of generality. However, the axioms \eqref{smonoidaleax1} to \eqref{smonoidaleax5} must be redrawn more fully using pasting diagrams since they currently are appropriate only for a Gray monoid. 

To complete this remark, notice that {\em right skew monoidale} also makes sense in any left skew-monoidal bicategory. This is because reversing 2-cells in $\CM$ yields a left skew-monoidal structure on $\CM^\mathrm{co}$; then a right skew monoidale in $\CM$ can be defined to be a left skew monoidale in $\CM^\mathrm{co}$.   
   
\end{remark}

\begin{example}
A left skew monoidale in the cartesian monoidal 2-category $\mathrm{Cat}$ of categories, functors and natural transformations is a left skew-monoidal category as in Section~\ref{Intro}. 
\end{example}

\begin{example}\label{lsVCat}
More generally, for braided monoidal category $\CV$, a left skew monoidale $\CC$ in the monoidal 2-category $\CV$-$\mathrm{Cat}$ of $\CV$-categories, $\CV$-functors and $\CV$-natural transformations is defined to be a left skew-monoidal $\CV$-category.
\end{example}

\begin{example}\label{lsConv}
For a symmetric closed monoidal category $\CV$ which is complete and cocomplete, a left skew monoidale $\CC$ in the monoidal bicategory $\CV$-$\mathrm{Mod}$ of $\CV$-categories, two-sided $\CV$-modules and $\CV$-module morphisms is defined to be a {\em left skew-promonoidal} $\CV${\em -category}. As for the case of a promonoidal $\CV$-category in Brian Day's doctoral thesis (see \cite{DayConv}), the same convolution formulas give a closed left skew-monoidal $\CV$-category $[\CC^{\mathrm{op}},\CV]$.    
\end{example}

\begin{example}
As a particular case of Example~\ref{lsConv} (actually, with far fewer conditions on $\CV$ and care with the braiding), we see from Section~\ref{Fotab} that each bimonoid in $\CV$ yields a left skew-promonoidal structure on the unit $\CV$-category $\CI$.    
\end{example}

It is possible to lift most of what we have said about skew-monoidal categories to skew monoidales.
In particular, the notion of {\em skew left warping} on a monoidale $A$ makes sense: just adapt in the obvious way the definition of warping on $A$ as given in Section 8 of \cite{Tanduoidal}; the data are:
\begin{itemize}
\item[(a)] a morphism $t: A \lra A$;
\item[(b)] a morphism $k: I\lra A$;
\item[(c)] a $2$-cell
$$\xymatrix{
& A\otimes A \ar[rr]^-{p}_<<<<<<<<<<<{\phantom{A}}="1" && A \ar[dr]^-{\phantom{A}t}& \\ 
A\otimes A \ar[ru]^-{t\otimes 1} \ar[rr]_-{t\otimes t} \ar@{}[rrrr]^<<<<<<<<<<<<<<<<<<<<<<<<<<<<<<<<{\phantom{A}}="2" \ar@{=>}"1";"2"^-{\phantom{a}v} && A\otimes A \ar[rr]_-{p} && A\,\, ; \\ 
}$$
\item[(d)] a $2$-cell
$$\xymatrix{
\ar@{}[rrrr]_<<<<<<<<<<<<<<<<<<<<<<*!/d1.5pt/{\labelstyle{\phantom{A}}}="1" && A \ar[rrd]^-{t}  && \\
I \ar[rru]^-{k} \ar[rrrr]_-{j}^-{\phantom{a}}="2" \ar@{=>}"1";"2"^-{\phantom{a}v_{0}} &&&& A\,\,;
}$$
\item[(e)] a $2$-cell
$$\xymatrix{
\ar@{}[rrrr]_<<<<<<<<<<<<<<<<<<<<<<<<<<<*!/d1.5pt/{\labelstyle{\phantom{A}}}="1" && A\otimes A \ar[rrd]^-{p}  && \\
A \ar[rru]^-{t\otimes k} \ar[rrrr]_-{1_A}^-{\phantom{a}}="2" \ar@{<=}"1";"2"^-{\phantom{a}\kappa} &&&& A\,\,;
}$$
\end{itemize}
subject to five axioms. We have more general forms of Propositions~\ref{opmonmonadfromwarp} and \ref{newskew}. 

\begin{proposition}\label{opmonmonad=warp}
Suppose the left skew monoidale $A$ is right normal. Then there is a bijection between opmonoidal monads on $A$ and skew left warpings on $A$ for which $k=j$. 
\end{proposition}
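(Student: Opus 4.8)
The plan is to prove this as the evident bicategorical upgrade of Proposition~\ref{opmonmonadfromwarp}, transporting the argument from the case $\CM=\mathrm{Cat}$ to a general Gray monoid. First I would fix notation: right normality of $A$ means the right unit constraint $r_A : A \lra A\otimes I$ (and hence the 2-cell $\rho$ exhibiting $r$ as a section along $1\otimes j$) is an equivalence, so we may invert it up to coherent isomorphism; since we write $\CM$ as a Gray monoid I will treat $r$ as invertible on the nose, remarking once that in the general monoidal bicategory one inserts the unit constraints of $\CM$ and checks the extra coherence cells are forced. The bijection to be established goes: given a skew left warping $(t,k=j,v,v_0,\kappa)$ on $A$, produce an opmonoidal monad $(t,\mu,\eta,\psi,\psi_0)$ on $A$, and conversely, with the two constructions mutually inverse. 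The constructions are dictated by Example~\ref{warpfromopmonmonad} and Proposition~\ref{newskew} read bicategorically, so the content is entirely in verifying the axioms.

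The key steps, in order, are as follows. (1) From a warping with $k=j$, define $\eta : 1 \Rightarrow t$ as $r^{-1}\cdot\kappa$ read appropriately (using $\kappa : 1_A \Rightarrow p(t\otimes k) = p(t\otimes j)$ composed with $r^{-1}$ against the right unit), and define $\mu : tt \Rightarrow t$ as the pasting $tt \xRightarrow{t r} t(t\otimes I)$-style composite, i.e. the bicategorical version of $ttA \xrightarrow{Tr_{tA}} t(tA\otimes I)\xrightarrow{v_{A,I}} tA\otimes tI \xrightarrow{1\otimes v_0} tA\otimes I \xrightarrow{r^{-1}} tA$. Here $v_{A,I}$ is read off the 2-cell $v$ by whiskering with the identity in the second slot; one must be a little careful that the "$v_{A,B}$" of the $\mathrm{Cat}$-level definition corresponds to whiskering $v$ by the appropriate 1-cells, which is exactly how Proposition~\ref{newskew} is phrased. (2) Verify the monad axioms (associativity and the two unit laws for $\mu,\eta$) from warping axioms \eqref{warpassoc}, \eqref{warpunit1}--\eqref{warpunit4} transported to 2-cells; the pentagon-type warping axiom gives associativity of $\mu$ after whiskering the relevant legs by $j$ and using right normality, and the warping unit axioms involving $v_0$ and $\kappa$ give the unit laws. (3) Define $\psi_{A,B}$ as the composite $p(t\otimes t) \Leftarrow$ built from $\eta$ whiskered on the first factor and then $v$, mirroring $T(A\otimes B)\xrightarrow{T(\eta\otimes 1)}T(tA\otimes B)\xrightarrow{v_{A,B}}tA\otimes tB$, and set $\psi_0 = v_0$. (4) Verify the three opmonoidal-functor axioms for $(t,\psi,\psi_0)$ and the two opmonoidal-monad compatibility axioms (that $\mu$ and $\eta$ are opmonoidal), again by pasting the warping axioms. (5) Run the reverse construction — from an opmonoidal monad produce a warping via Example~\ref{warpfromopmonmonad} — and check the five warping axioms from the opmonoidal-monad axioms. (6) Check the two round-trips are identities: starting from a warping, the recovered $v_{A,B}$ is $(\mu\otimes 1)\circ\psi_{tA,B}$ which unwinds, using $r$-normality and the definition of $\psi$ and $\mu$, back to the original $v$; and similarly $v_0$, $k$ are recovered on the nose, and conversely $\mu,\eta,\psi,\psi_0$ are recovered.

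I expect the main obstacle to be bookkeeping rather than any genuine conceptual difficulty: because $\CM$ is only written \emph{as if} it were a Gray monoid, every equation that at the $\mathrm{Cat}$ level was an equality of morphisms becomes an equality of pasting diagrams of 2-cells, and one must keep track of the associativity and unit pseudonaturality 2-cells $a$, $\ell$, $r$ of $\CM$ (not to be confused with the skew constraints $\alpha,\lambda,\rho$ of $A$) as well as the interchange isomorphisms of the Gray monoid. In particular, the inversion "$r^{-1}$" used pervasively in the monad multiplication is really inversion of the 2-cell $\rho$ composed with the equivalence $r_A$, and one must confirm that the resulting $\mu$ is genuinely natural and that the round-trip computations do not leave residual coherence 2-cells. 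The honest move, which I would make explicit, is to observe that all of these checks are \emph{formal consequences} of the already-established $\mathrm{Cat}$-level Proposition~\ref{opmonmonadfromwarp} applied in the hom-bicategory after representing $A$ via $\CM(I,-)$ or, more cleanly, by noting that both sides are defined by pasting-diagram identities that are preserved under any Gray-monoid structure, so it suffices to exhibit the six families of 2-cells above and cite that their axioms coincide term-by-term with \eqref{warpassoc}--\eqref{warpunit4}; hence the bijection of Proposition~\ref{opmonmonadfromwarp} lifts verbatim. I would therefore present the proof as: give the two constructions explicitly as pasting composites, state that each required axiom is literally one of the warping axioms (respectively monad/opmonoidal axioms) after the indicated whiskering, and leave the diagram-chase of the two round-trips — which is routine given right normality — to the reader, exactly in the spirit of the one-line proof the authors gave for Proposition~\ref{newskew}.
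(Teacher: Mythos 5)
Your proposal is correct and follows exactly the route the paper intends: the paper gives no proof of Proposition~\ref{opmonmonad=warp} at all, presenting it merely as the ``more general form'' of Proposition~\ref{opmonmonadfromwarp}, whose $\mathrm{Cat}$-level proof consists of precisely the formulas for $\mu$, $\eta$, $\psi$ and $\psi_0$ that you lift to pasting composites of 2-cells. The only caution is that your first suggested shortcut --- deducing the result by representing $A$ via $\CM(I,-)$ --- would not suffice on its own, since that pseudofunctor need not be conservative; but your preferred alternative (each axiom is term-by-term a whiskered warping axiom, so the $\mathrm{Cat}$-level diagram chase transcribes verbatim to the Gray monoid) is the right justification and is all the paper itself implicitly relies on.
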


\begin{proposition}\label{gennewskew}
A skew left warping of a left skew monoidale $A$ determines another left skew monoidal structure on $A$ as follows:
\begin{itemize}
\item[(a)] tensor product functor $p_t = p(t\otimes1_A):A\otimes A \lra A$;
\item[(b)] unit $k : I\lra A$;
\item[(c)] associativity constraint $$p(t\otimes 1)(p\otimes 1)(t\otimes 1\otimes 1) \stackrel{p(v\otimes 1)}\Lra p(p\otimes 1) (t\otimes t\otimes 1) \stackrel{\alpha(t\otimes t\otimes 1)}\Lra p(1\otimes p) (t\otimes t\otimes 1) \ ;$$
\item[(d)] left unit constraint $$p(tk\otimes 1)\stackrel{p(v_0\otimes 1)}\Lra p(j\otimes 1) \stackrel{\lambda}\Lra 1_A \ ;$$
\item[(e)] right unit constraint $$1_A\stackrel{\kappa} \Lra p(t\otimes k) \ .$$
\end{itemize}
There is an opmonoidal morphism $(t,v_0,v) : (A,k,p_t) \lra (A,p,j)$.
\end{proposition}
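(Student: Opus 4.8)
This statement is the Gray-monoid analogue of Proposition~\ref{newskew}, and the plan is to prove it by the same strategy: verify the five skew-monoidale axioms \eqref{smonoidaleax1}--\eqref{smonoidaleax5} for the candidate structure $(A,p_t,k)$ with the given constraints by pasting-diagram computations, and then read off the opmonoidal morphism. As throughout the paper we work in a Gray monoid (the general case following by coherence for monoidal bicategories, cf.\ Remark~\ref{SMonBicat}), so the associativity and unit constraints of $\CM$ are strict and the only structural $2$-cells of $\CM$ that intervene are the interchange isomorphisms $(f\otimes 1)(1\otimes g)\cong(1\otimes g)(f\otimes 1)$; note that, in contrast with Proposition~\ref{opmonmonad=warp}, no normality hypothesis on $A$ is needed. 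The first, purely bookkeeping, step is to check that the composites displayed in (c), (d), (e) really do have the types $p_t(p_t\otimes 1)\Rightarrow p_t(1\otimes p_t)$, $p_t(k\otimes 1)\Rightarrow\ell$ and $r\Rightarrow p_t(1\otimes k)$ demanded of associativity, left-unit and right-unit constraints; the sources match on the nose (the functor $-\otimes A$ is strict), while the targets match up to the interchange isomorphisms (e.g.\ $p_t(1\otimes k)\cong p(t\otimes k)$ and $p_t(1\otimes p_t)\cong p(1\otimes p)(t\otimes t\otimes 1)$).

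The bulk of the proof is the verification of the five axioms, each of which is obtained by pasting the appropriate one of the five warping axioms onto one (sometimes two) of the original skew-monoidale axioms for $(A,p,j)$, after inserting the interchange cells and the whiskerings of $\alpha$, $\lambda$, $\rho$ by $t$ needed to make the two pieces fit together. In outline: \eqref{smonoidaleax1} for $p_t$ comes from the ``warped pentagon'' axiom on $v$ pasted with the original pentagon \eqref{smonoidaleax1}; the mixed unit axioms \eqref{smonoidaleax2}, \eqref{smonoidaleax3}, \eqref{smonoidaleax4} come from the three warping axioms relating $v$ to $v_0$ or $\kappa$, each pasted with the appropriate original unit axioms among \eqref{smonoidaleax2}--\eqref{smonoidaleax4} and the naturality of $\lambda$ or $\rho$; and \eqref{smonoidaleax5} for $k$ comes from the remaining warping axiom (the one relating $v_0$, $\kappa$ and $\lambda$) together with the original \eqref{smonoidaleax5}.

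With the explicit formulas for the new $\alpha$, $\lambda$ and $\rho=\kappa$ in hand, the final claim that $(t,v_0,v):(A,k,p_t)\lra(A,p,j)$ is opmonoidal is then essentially free: its associativity-compatibility axiom is the warped-pentagon axiom on $v$ rewhiskered once the formula $\alpha_t=\alpha(t\otimes t\otimes 1)\cdot p(v\otimes 1)$ is substituted, and its two unit-compatibility axioms are exactly the two warping axioms in which $v_0$ occurs; compatibility of $v_0$ with the (strict) unit constraints of $\CM$ is automatic.

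The main obstacle is the pentagon \eqref{smonoidaleax1}: it is by far the largest pasting diagram, and the difficulty is entirely combinatorial — one must track how the various occurrences of $t$ along the source $A^{\otimes 4}\to A$ are distributed, use the interchange isomorphisms to move them into position, and appeal to the naturality of the original associativity constraint $\alpha$ whiskered by maps of the form $t\otimes t\otimes t$, so that the warped pentagon for $v$ glues cleanly onto the ordinary pentagon for $\alpha$. Once the pentagon is dealt with, the remaining four axioms and the opmonoidal morphism are routine $2$-cell manipulations, each no larger than the corresponding original skew-monoidale axiom.
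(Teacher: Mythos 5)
Your proposal is correct and takes the approach the paper intends: the paper states Proposition~\ref{gennewskew} without proof, presenting it as the monoidale-level analogue of the ``easily proved'' Proposition~\ref{newskew}, and your direct verification of the five axioms --- pasting each warping axiom onto the corresponding original skew-monoidale axiom, mediated by interchange cells and naturality of $\alpha$, $\lambda$, $\rho$ whiskered by $t$ --- together with reading off the opmonoidal morphism from $v$ and $v_0$, is exactly that argument.
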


\begin{example}
Suppose $K$ is an object with a right bidual $R$ in the monoidal bicategory 
$\CM$, so that we have a unit $n:I\lra R\otimes K$ and counit $e:K\otimes R\lra I$. 
We can generate the monoidale $K^{\mathrm{e}}=R\otimes K$ with tensor product 
$p=1\otimes e \otimes 1$ and unit $j=n$. 
Consideration of bialgebroids motivates us to consider opmonoidal monads 
$t$ on $K^{\mathrm{e}}$. By Proposition~\ref{opmonmonad=warp}, 
these are in bijection with skew left warpings $t$ on $K^{\mathrm{e}}$ with $k=n$. 
Now by Proposition~\ref{gennewskew}, any such determines a new left skew monoidal structure $p_t$ on $K^{\mathrm{e}}$. 
\end{example}

\section{Two bicategorical theorems}\label{2BT}

Let $\CM$ be a monoidal bicategory containing an object $K$ with a right bidual $R$. 
A Hopf left skew-monoidal bicategory $\CM_K$ is defined by bumping 
Example~\ref{newfromdual} up a dimension. 
The tensor product is $A\ast B = A\otimes R\otimes B$ with unit $K$. 
Let $K^{\mathrm{e}}$ denote the monoidale $R\otimes K$ with tensor product 
$$p=1\otimes e\otimes 1: R\otimes K\otimes R\otimes K\lra R\otimes K$$ and unit 
$$j=n:I\lra R\otimes K \ .$$

Our purpose in this section is to give two reinterpretations of opmonoidal monads on $K^{\mathrm{e}}$ as right skew monoidales. The first is quite general while the second requires a monadicity condition satisfied in examples of interest. 

The biduality $K\dashv_\mathrm{b} R$ gives an equivalence of categories
\begin{equation}\label{rawequiv1}
\CM (K^{\mathrm{e}},K^{\mathrm{e}}) \simeq \CM (K\ast K,K) 
\end{equation}
taking $t: R\otimes K \lra R\otimes K$ to $\hat{t}: K\otimes R\otimes K\stackrel{1\otimes t}\lra K\otimes R\otimes K\stackrel{e\otimes 1}\lra K$. 
Similarly, we have equivalences of categories
\begin{equation}\label{rawequiv2}
\CM (K^{\mathrm{e}}\otimes K^{\mathrm{e}},K^{\mathrm{e}}) \simeq \CM (K\ast K\ast K,K) 
\end{equation}
and
\begin{equation}\label{rawequiv3}
\CM (I,K^{\mathrm{e}}) \simeq \CM (K,K) \ . 
\end{equation}

\begin{theorem}\label{opmonmonadsasSkew1}
The equivalence \eqref{rawequiv1} induces an equivalence of categories between opmonoidal monads $t$ on $K^{\mathrm{e}}$ in the monoidal bicategory $\CM$ and right skew-monoidal structures on $K$ with unit $1_K$ in the Hopf left skew-monoidal bicategory $\CM_K$.   
\end{theorem}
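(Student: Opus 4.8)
The plan is to exploit the biduality $K \dashv_{\mathrm{b}} R$ to transport all the structure back and forth across the equivalence \eqref{rawequiv1}, and to check that the transported structure is exactly what is required on each side. First I would make the equivalence \eqref{rawequiv1} completely explicit, together with its companions \eqref{rawequiv2} and \eqref{rawequiv3}, by writing down pseudo-inverse functors using the unit $n$ and counit $e$ and the triangle equations of the biduality; this gives, for any $t : K^{\mathrm{e}} \lra K^{\mathrm{e}}$, its mate $\hat t : K \ast K \lra K$, and likewise translates $2$-cells. The key book-keeping observation is that, under these equivalences, composition of endomorphisms of $K^{\mathrm{e}}$ corresponds to the ternary operation on $\CM(K\ast K, K)$ that one gets by substituting into $p$, i.e. the equivalence \eqref{rawequiv1} is multiplicative in the appropriate lax sense. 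I would isolate this as a lemma: the hom-category $\CM(K^{\mathrm{e}}, K^{\mathrm{e}})$ with composition corresponds, under \eqref{rawequiv1}, to $\CM(K\ast K, K)$ with the substitution product induced by the skew-monoidal structure of $\CM_K$, with $1_{K^{\mathrm{e}}}$ corresponding to $r_K$ or $\ell_K$ (one needs to pin down exactly which, and that choice is what makes the unit $1_K$ appear in the statement).

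Granting that lemma, the proof is essentially a dictionary. A monad structure on $t$ is a multiplication $t\,t \Lra t$ and unit $1 \Lra t$ satisfying associativity and unit axioms; under the equivalence these become precisely a $2$-cell $p(\hat t \otimes 1)(1\otimes \hat t) \Lra \hat t$ (or the appropriately bracketed version — here the skew associator $a$ of $\CM_K$ enters) and a $2$-cell $\hat t \circ r_K \Lra \hat t$-style unit, i.e. exactly the data of an associativity constraint and a right unit constraint for a right skew-monoidal structure on $K$ in $\CM_K$ with tensor product $\hat t$ and unit $1_K$. The opmonoidal structure on $t$ — a $2$-cell $\psi : t\,p^{K^{\mathrm{e}}} \Lra p^{K^{\mathrm{e}}}(t\otimes t)$ compatible with the monoidale structure of $K^{\mathrm{e}}$, together with $\psi_0 : t\,j \Lra j$ — transports, via \eqref{rawequiv2} and \eqref{rawequiv3}, to the remaining constraint (the left unit constraint) of the right skew-monoidal structure on $K$, and the three opmonoidal axioms match up, one at a time, with the remaining right-skew-monoidale axioms. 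So the correspondence on objects is: opmonoidal monad $(t,\mu,\eta,\psi,\psi_0)$ on $K^{\mathrm{e}}$ $\leftrightarrow$ right skew monoidale $(K, \hat t, 1_K, \text{constraints})$ in $\CM_K$. On morphisms, a morphism of opmonoidal monads is a $2$-cell $t \Rightarrow t'$ compatible with everything, which transports to a $2$-cell $\hat t \Rightarrow \hat t'$ compatible with all the skew-monoidale constraints, i.e. a morphism of right skew monoidales; functoriality and the equivalence property then follow formally because \eqref{rawequiv1} is already an equivalence of categories.

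The main obstacle I anticipate is purely organisational rather than conceptual: matching up the five skew-monoidale axioms \eqref{smonoidaleax1}--\eqref{smonoidaleax5} (in their right-skew, bicategorical form) with the monad axioms plus the three opmonoidal-functor axioms, while carrying along the associativity and unit constraints $a, \ell, r$ of the ambient skew-monoidal bicategory $\CM_K$ — which, crucially, are \emph{not} invertible on the unit side, so one must be careful about which composites are genuinely equal and which are only related by a (possibly non-invertible) constraint $2$-cell. In particular one must use that $\CM_K$ is \emph{Hopf} (its associator, coming from the biduality, is invertible) at exactly the point where the monad associativity — a strict equality — is translated, since the skew associator of $\CM_K$ must be invertible there for the translation to be an honest bijection on that piece of data; and one must check that right-normality is \emph{not} needed here (it was needed in Proposition~\ref{opmonmonad=warp} to invert $\rho$, but here the unit of the skew monoidale on $K$ is $1_K$, which sidesteps that). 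I would handle this by setting up the dictionary lemma carefully enough that each of the eight axioms on one side visibly becomes one of the eight on the other side under a single pasting manipulation, rather than attempting a monolithic verification. A secondary, genuinely bicategorical, subtlety is that \eqref{rawequiv1} is only an \emph{equivalence}, not an isomorphism, of hom-categories, and is multiplicative only up to coherent invertible $2$-cells; so strictly speaking one transports a \emph{pseudo}-monad/pseudo-opmonoidal structure, and one should remark (as the paper's conventions about Gray monoids allow) that this is harmless — the coherence data can be absorbed — so that the final statement about ordinary opmonoidal monads and ordinary right skew monoidales is correct as stated.
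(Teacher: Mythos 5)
Your high-level strategy --- transport everything across the biduality equivalences \eqref{rawequiv1}--\eqref{rawequiv3} and match data and axioms --- is the same as the paper's, but your dictionary is miscalibrated in a way that would make the verification fail, and the fix is precisely the step you omit. Concretely: the opmonoidal binary constraint $\psi : t\,p \Rightarrow p(t\otimes t)$ is a $2$-cell between morphisms $K^{\mathrm{e}}\otimes K^{\mathrm{e}} \lra K^{\mathrm{e}}$, so under \eqref{rawequiv2} it lives over $\CM(K\ast K\ast K, K)$; it therefore cannot transport to the left unit constraint (a $2$-cell over $\CM(K\ast K,K)$, i.e.\ over $\CM(K^{\mathrm{e}},K^{\mathrm{e}})$ via \eqref{rawequiv1}) as you propose --- the hom-categories do not match. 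It must instead contribute to the \emph{associativity} constraint. Dually, the monad multiplication $\mu : tt\Rightarrow t$ alone does not give the associator either: $\widehat{tt}$ is again a morphism $K\ast K\lra K$, not $K\ast K\ast K\lra K$, so there is no substitution product on $\CM(K\ast K,K)$ corresponding to composition in $\CM(K^{\mathrm{e}},K^{\mathrm{e}})$ in the sense your book-keeping lemma asserts. The correct correspondence is: the associator $\alpha_K$ corresponds to the \emph{fusion} $2$-cell $v : t\,p(1\otimes t) \Rightarrow p(t\otimes t)$ built from $\mu$ \emph{and} $\psi$ together; the monad unit $\eta$ corresponds to $\lambda_K$; and $\psi_0$ corresponds to $\rho_K$. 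With your assignment the counts do not even balance (four data and ten axioms for an opmonoidal monad against three data and five axioms for a right skew monoidale), so no one-at-a-time matching of axioms of the kind you describe can exist.

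The missing idea is exactly Proposition~\ref{opmonmonad=warp}: an opmonoidal monad on $K^{\mathrm{e}}$ (which is right normal, being a genuine monoidale) is the same as a skew warping of $K^{\mathrm{e}}$ with $k=j=n$, i.e.\ three $2$-cells $(v,v_0,\kappa)$ subject to five axioms. This repackaging collapses $(\mu,\psi)$ into the single fusion cell $v$ and turns the subsequent transport into a clean three-for-three, five-for-five dictionary under \eqref{rawequiv2}, \eqref{rawequiv3} and \eqref{rawequiv1}, which is how the paper concludes. (Incidentally, right-normality \emph{is} used --- for $K^{\mathrm{e}}$, where it holds automatically --- contrary to your remark that it can be sidestepped.) Without this intermediate step your dictionary lemma as stated is false, and the proof does not go through.
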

\begin{proof}
By Proposition~\ref{opmonmonad=warp}, an opmonoidal monad $t$ on $K^{\mathrm{e}}$ 
can equally be thought of as a skew right warping of $K^{\mathrm{e}}$ with unit $k=n$. 
So we have a right fusion 2-cell 
$$v:t(1\otimes e\otimes 1)(1_{K^{\mathrm{e}}}\otimes t)\Lra (1\otimes e\otimes 1)(t\otimes t):K^{\mathrm{e}}\otimes K^{\mathrm{e}}\lra K^{\mathrm{e}} \ ,$$ 
and 2-cells 
$$v_0= \psi_0 : tn\Lra n:I\lra K^{\mathrm{e}}$$ and 
$$\kappa = \eta :1_{K^{\mathrm{e}}} \Lra (1\otimes e\otimes 1)(n\otimes t)\cong t: K^{\mathrm{e}}\lra K^{\mathrm{e}} \ ,$$ 
satisfying five axioms.  Now look at the definition of right skew-monoidale: see \eqref{smonoidale1}, \eqref{smonoidale2} and \eqref{smonoidale3} with $\alpha$, $\lambda$ and $\rho$ reversed. 
In our case, with $p=\hat{t}$ and $j=1_K$, we see that we require 
$$\alpha_K : \hat{t}(1_K\otimes 1_R\otimes \hat{t})\Lra \hat{t}(\hat{t}\otimes 1_R\otimes 1_K):  K\ast K\ast K \lra K \ ,$$
$$\rho_K: \hat{t}(1_K\otimes n) \Lra 1_K :K \lra K$$ and
$$\lambda_K : e\otimes 1_K \Lra \hat{t} : K\ast K\lra K $$
satisfying the five axioms. 
We obtain the equivalence of the Theorem by choosing $v$, $v_0$ and $\kappa$ to correspond respectively to $\alpha_K$, $\rho_K$ and $\lambda_K$ under the equivalences \eqref{rawequiv2}, \eqref{rawequiv3} and \eqref{rawequiv1}.  
\end{proof}

Now we present what seems to us to be the deepest result of the paper. 

\begin{theorem}\label{opmonmonadsasSkew2} 
Suppose $\CM$ is a monoidal bicategory in which composition on both sides with a morphism preserves any reflexive coequalizers that exist in the hom categories.
Suppose $K\dashv_\mathrm{b} R$ is a biduality in $\CM$. 
Suppose also that $j:I\lra R$ is an opmonadic (=~Kleisli-type) morphism in $\CM$ and that the opmonadicity is preserved by tensoring with objects on both sides. 
Let $j^*:R\lra I$ be a right adjoint for $j$ and put $j_{\circ}$ equal to $(I\stackrel{n}\lra R\otimes K\stackrel{j^*\otimes 1}\lra K)$.
There is an equivalence of categories between opmonoidal monads $t$ on $K^{\mathrm{e}}$ in the monoidal bicategory $\CM$ and right skew-monoidal structures on $K$ with unit $j_{\circ}$ in the monoidal bicategory $\CM$. 
The skew tensor product $q_t$ for $K$ corresponding to $t$ is the composite 
$$K\otimes K\stackrel{1\otimes j\otimes 1}\lra K\otimes R\otimes K\stackrel{1\otimes t}\lra K\otimes R\otimes K\stackrel{e\otimes 1}\lra K \ .$$   
\end{theorem}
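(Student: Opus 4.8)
The plan is to deduce Theorem~\ref{opmonmonadsasSkew2} from Theorem~\ref{opmonmonadsasSkew1} by transporting the right skew-monoidal structure on $K$ from the Hopf left skew-monoidal bicategory $\CM_K$ to the original monoidal bicategory $\CM$ along the opmonadicity hypothesis. Concretely, Theorem~\ref{opmonmonadsasSkew1} already gives an equivalence between opmonoidal monads $t$ on $K^{\mathrm{e}}$ and right skew-monoidal structures on $K$ with unit $1_K$ in $\CM_K$, in which the tensor product is $\hat t: K\ast K\ast K... $ — more precisely $p=\hat t:K\ast K\lra K$, that is $K\otimes R\otimes K\lra K$. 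So the content of the present theorem is a translation: a right skew-monoidal structure on $K$ in $\CM_K$ with unit $1_K$ is ``the same as'' a right skew-monoidal structure on $K$ in $\CM$ with unit $j_\circ$, via the relation $q_t = \hat t\,(1\otimes j\otimes 1)$ between the two tensor products. The asymmetry between $j$ (a morphism $I\lra R$) and $1_K$ versus $j_\circ$ is exactly what the opmonadicity of $j$ is there to handle.

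First I would set up the comparison precisely. Write $q$ for a candidate skew tensor product $K\otimes K\lra K$ in $\CM$ and $p$ for one in $\CM_K$, i.e.\ a morphism $K\otimes R\otimes K\lra K$. Since $j:I\lra R$ is opmonadic and opmonadicity is stable under tensoring with $K$ on either side, the morphism $1_K\otimes j\otimes 1_K: K\otimes K\lra K\otimes R\otimes K$ is opmonadic; hence precomposition with it, $\CM(K\otimes R\otimes K,K)\lra\CM(K\otimes K,K)$, is (by the universal property of Kleisli/opmonadic morphisms, i.e.\ the dual of the Eilenberg--Moore/monadicity statement for hom-categories) fully faithful, with essential image those $q$ that ``extend back along $j$''. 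I would record the comparison of units: the unit $1_K$ of the structure in $\CM_K$ is a morphism $K\ast K = K\otimes R\otimes K$... no — the unit of a right skew-monoidale on $K$ in $\CM_K$ is a morphism $K\lra K$ in $\CM_K$, which is a morphism $K\otimes R\lra K$ in $\CM$; tracing through the biduality and the definition of $j_\circ=(j^*\otimes 1)n$ shows that the unit $1_K$ in $\CM_K$ corresponds under $-\circ(1\otimes j)$ type manipulations to $j_\circ$ in $\CM$. (This is the bookkeeping that pins down why $j_\circ$, and not some other unit, is forced.)

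Next I would check that all the structure 2-cells and the five skew-monoidale axioms transport. The key point is that every object appearing in the axioms of a right skew-monoidale on $K$ — namely $K$, $K\otimes K$, $K\otimes K\otimes K$ — arises by tensoring $K$ or $I$ by copies of $K$, and correspondingly in $\CM_K$ the relevant homs are $\CM(K^{\mathrm{e}},K^{\mathrm{e}})$, $\CM(K^{\mathrm{e}}\otimes K^{\mathrm{e}},K^{\mathrm{e}})$, $\CM(I,K^{\mathrm{e}})$ as in \eqref{rawequiv1}--\eqref{rawequiv3}. By the stability of opmonadicity under tensoring, each of these maps to the corresponding hom-category in the ``$\CM$ with unit $j_\circ$'' picture by a fully faithful functor induced by whiskering with (tensor powers of) $1\otimes j\otimes 1$; the hypothesis that composition on both sides preserves reflexive coequalizers is what guarantees these whiskering functors are compatible with the composites out of which the structure constraints $v,v_0,\kappa$ (equivalently $\alpha_K,\rho_K,\lambda_K$) and the axiom pasting diagrams are built, so that a diagram of 2-cells commutes downstairs iff it commutes upstairs. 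Since fully faithful functors reflect and create such equational data, the five axioms for a right skew-monoidale on $K$ with unit $j_\circ$ in $\CM$ are equivalent to the five axioms for one with unit $1_K$ in $\CM_K$; composing with the equivalence of Theorem~\ref{opmonmonadsasSkew1} yields the desired equivalence of categories, and unwinding $p=\hat t$ together with $q_t=\hat t\,(1\otimes j\otimes 1)$ gives exactly the stated formula $q_t = (e\otimes 1)(1\otimes t)(1\otimes j\otimes 1)$.

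I expect the main obstacle to be the middle step: showing that whiskering by the opmonadic morphisms is genuinely compatible with \emph{composition} in $\CM$, not just with the hom-categories individually. An opmonadic morphism gives a left-lifting/Kleisli universal property in a single hom-category, but here the skew-monoidale axioms involve composites such as $p(p\otimes 1)$, and one must know that ``extending $q$ along $1\otimes j\otimes 1$'' is coherent with forming $q\otimes 1$, precomposing, etc. This is precisely where the reflexive-coequalizer hypothesis does real work: opmonadic (Kleisli-type) morphisms are split/absolute-type coequalizers, their tensor stability plus the coequalizer-preservation of composition on both sides lets one present the higher homs as (reflexive) coequalizers in a way compatible with all the whiskerings, so that the comparison functors assemble into (the hom-level of) a map of the relevant structures. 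Making this bookkeeping airtight — rather than the individual axiom verifications, which are then formal — is the technical heart of the proof.
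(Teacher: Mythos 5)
Your overall strategy coincides with the paper's: reduce to Theorem~\ref{opmonmonadsasSkew1} and then use the opmonadicity of $j$ to pass between $\hat t:K\otimes R\otimes K\lra K$ and $q_t=\hat t(1\otimes j\otimes 1):K\otimes K\lra K$. But there is a genuine gap at the central step. Precomposition with the opmonadic morphism $1\otimes j\otimes 1$ is \emph{not} fully faithful. Opmonadicity says that $K\otimes R\otimes K$ is a Kleisli object for the monad $1\otimes s\otimes 1$ (where $s=j^*j$), so $\CM(K\otimes R\otimes K,K)$ is equivalent to the category of \emph{opactions} $\zeta:q(1\otimes s\otimes 1)\Lra q$ in $\CM(K\otimes K,K)$; the comparison to $\CM(K\otimes K,K)$ is the forgetful functor from opactions, which is faithful but in general neither full nor injective on objects up to the "extends back along $j$" criterion you state. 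Consequently the claims that the axioms "commute downstairs iff they commute upstairs" and that "fully faithful functors reflect and create such equational data" do not hold, and your plan gives no way to reconstruct $\hat t$ from $q$. The real content of the proof is precisely here: one must \emph{construct} the opaction $\zeta$ out of the skew-monoidal data with unit $j_\circ$ --- the paper builds it explicitly from the mate $\nu:j\otimes 1\Lra(1\otimes q)(n\otimes 1)$ of $\lambda$, together with $\alpha$ and $\rho$ --- prove the opaction axioms and the uniqueness of this choice, and then verify that the constraint 2-cells $\alpha$, $\lambda$, $\rho$ are compatible with the induced opactions so that they, too, lift along the Kleisli universal property. Your proposal correctly senses that "the technical heart" lies in compatibility of the liftings with composition, but without producing $\zeta$ the inverse construction is not even defined.

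Two smaller points. First, your bookkeeping for the unit is off: a morphism $K\lra K$ in $\CM_K$ is literally a morphism $K\lra K$ in $\CM$ (only the tensor product changes, not the hom-categories), and the passage from the unit $1_K$ in $\CM_K$ to $j_\circ=(j^*\otimes 1)n$ in $\CM$ goes through the biduality equivalence $\CM(K,K)\simeq\CM(I,R\otimes K)$ followed by postcomposition with $j^*\otimes 1$ --- it is not an instance of precomposition with an opmonadic morphism, so it needs a separate (mate-calculus) argument, which the paper supplies for $\lambda$ and $\rho$. Second, the reflexive-coequalizer hypothesis is not a blanket device for making whiskerings compatible; the paper invokes it at one specific point, namely to retrieve $\rho_K$ from $\rho$ using that the counit $\varepsilon:jj^*\Lra 1_R$ is the reflexive coequalizer of $\varepsilon jj^*$ and $jj^*\varepsilon$, and that composition preserves such coequalizers.
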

\begin{proof}
Let $s=j^*j$ be the monad generated by the adjunction $j\dashv j^*$.
We shall use Theorem~\ref{opmonmonadsasSkew1} to avoid the opmonoidal monad $t$ by dealing with the data $\hat{t}$, $1_K$, $\alpha_K$, $\rho_K$ and $\lambda_K$ which already form a right skew-monoidal structure on $K$ somewhere, namely, in $\CM_K$. 
While $q_t$ is obtained from $\hat{t}$ simply as the composite 
$K\otimes K\stackrel{1\otimes j\otimes 1}\lra K\otimes R\otimes K\stackrel{\hat{t}}\lra K$, 
in order to use the opmonadic property of $1\otimes j\otimes 1:K\otimes K \lra K\otimes R\otimes K$ 
to reproduce a $\hat{t}$ from a right skew tensor morphism $q:K\otimes K\lra K$, 
we require an opaction 
$$\zeta : q(1\otimes s\otimes 1)\Lra q:K\otimes K\lra K \ .$$  
This 2-cell is obtained as follows. Using the adjunction $j\otimes 1\dashv j^*\otimes 1$, we obtain a mate $\nu : j\otimes 1\Lra (1\otimes q)(n\otimes 1)$ for $\lambda : 1\Lra q(j_{\circ}\otimes 1):K\lra K$. We define $\zeta$ to be the composite 
\begin{eqnarray*}
q(1\otimes j^*\otimes 1)(1\otimes j\otimes 1)\stackrel{q(1\otimes j^*\otimes 1)(1\otimes \nu)}\Lra q(1\otimes j^*\otimes 1)(1\otimes 1\otimes q)(1\otimes n\otimes 1)
\\ \cong q(1\otimes q)(1\otimes j^*\otimes 1\otimes 1)(1\otimes n\otimes 1) \stackrel{\alpha (1\otimes j^*\otimes 1\otimes 1)(1\otimes n\otimes 1)} \Lra 
\\ q(q\otimes 1)(1\otimes j^*\otimes 1\otimes 1)(1\otimes n\otimes 1)\stackrel{q(\rho \otimes 1)}\Lra q \ .
\end{eqnarray*}  
The opaction axioms and the necessity for $\zeta$ to be defined this way can be verified.

Similarly, the constraint $\alpha : q(1\otimes q)\Lra q(q\otimes 1)$ is obtained simply from $\alpha_K$ by precomposing with 
$1\otimes j\otimes 1\otimes j\otimes 1: K\otimes K\otimes K\lra K\otimes R\otimes K\otimes R\otimes K$. 
However, to reproduce $\alpha_K$ from $\alpha$, we use the opmonadicity of 
%$1\otimes j\otimes 1\otimes j\otimes 1$; 
$1\otimes j\otimes 1\otimes 1\otimes 1$ and  $1\otimes 1\otimes 1\otimes j\otimes 1$; 
it requires verifying that $\alpha$ is compatible with the coactions coming from $\zeta$.

A right adjoint $j^{\circ}$ for $j_{\circ}$ is defined by the composite 
$$K\otimes K\stackrel{1\otimes j}\lra K\otimes R \stackrel{e}\lra I \ .$$
So $\lambda : 1\Lra q_t(j_{\circ}\otimes 1)$ and $\lambda_K(1\otimes j\otimes 1) : j^{\circ}\otimes 1 \Lra q_t$ are mates. Again, to retrieve $\lambda_K$ from $\lambda$, the opmonadicity of $1\otimes j\otimes 1$ is used.     

Since $j\otimes 1_K \cong n j^{\circ} : K \lra R\otimes K$, we see that $\rho_K(1\otimes j^{\circ}):q\Lra 1\otimes j^{\circ}$ and $\rho : q(1\otimes j_{\circ})\Lra1$ are mates. To retrieve $\rho_K$ from $\rho$ we need to use the fact that the counit $\varepsilon : jj^*\Lra 1_R$ is the coequalizer of the two 2-cells $(\varepsilon jj^*), (jj^*\varepsilon) : jj^*jj^*\Lra jj^*$.    \end{proof}

\section{Quantum categories}\label{Qc}

Let $\CV$ be a braided monoidal category which has equalizers of coreflexive pairs such that tensoring $V\otimes -$ with a fixed object $V$ preserves those equalizers. 
This allows the construction (see \cite{D&A}, \cite{QCat} or \cite{Chikh2011}) of the autonomous monoidal bicategory 
$\mathrm{Comod}(\CV)$ whose objects are comonoids $C$ in $\CV$ and whose morphisms 
$M:C\lra D$ are left $C$-, right $D$-comodules. 
Recall that the composite $NM=M\otimes_D N$ of comodules $M:C\lra D$ and $N:D\lra E$ is given by the usual coreflexive equalizer.  
The right bidual of $C$ is written $C^{\circ}$ with counit $e:C\otimes C^{\circ}\lra I$ and unit $n:I \lra C^{\circ}\otimes C$. 
There is a canonical monoidale (= pseudomonoid) structure on $C^{\mathrm{e}} = C^{\circ}\otimes C$ with multiplication 
$p=1\otimes e \otimes 1:C^{\mathrm{e}}\otimes C^{\mathrm{e}} \lra C^{\mathrm{e}}$ 
and unit $j=n:I\lra C^{\mathrm{e}}$. 

A \textit{quantum category} $(C,A)$ \textit{in} $\CV$ is a monoidal comonad $A$ on the canonical monoidale $C^{\mathrm{e}} = C^{\circ}\otimes C$. This means that, in $\mathrm{Comod}(\CV)$, we have a morphism  $A:C^{\mathrm{e}}\lra C^{\mathrm{e}}$ equipped with 2-cells 
\begin{equation}
\varepsilon : A \Lra 1_{C^{\mathrm{e}}}
\end{equation}
and 
\begin{equation}
\delta : A \Lra A\otimes_{C^{\mathrm{e}}} A \ ,
\end{equation}
satisfying the three comonad axioms, plus two more 2-cells
\begin{equation}
\begin{aligned}
\xymatrix{
C^{\mathrm{e}} \otimes C^{\mathrm{e}} \ar[d]_{A\otimes A}^(0.5){\phantom{AAAAA}}="1" \ar[rr]^{p}  && C^{\mathrm{e}} \ar[d]^{A}_(0.5){\phantom{AAAAA}}="2" \ar@{=>}"1";"2"^-{\phi_2}
\\
C^{\mathrm{e}} \otimes C^{\mathrm{e}} \ar[rr]_-{p} && C^{\mathrm{e}} }
\end{aligned}
\end{equation}  
and
\begin{equation}
\begin{aligned}
\xymatrix{
I \ar[rd]_{j}^(0.5){\phantom{A}}="1" \ar[rr]^{j}  && C^{\mathrm{e}} \ar[ld]^{A}_(0.5){\phantom{A}}="2" \ar@{=>}"1";"2"_-{\phi_0}
\\
& C^{\mathrm{e}}  & , }
\end{aligned} 
\end{equation}
satisfying the three axioms making $A$ a monoidal morphism and the four axioms making $\varepsilon$ and $\delta$ monoidal 2-cells; this makes ten axioms in all.   

\begin{example} 
A quantum category $(C,A)$ with $C=I$ is precisely a bimonoid $A$ in $\CV$: the comultiplication is $\delta$, the counit is $\varepsilon$, the multiplication is $\phi_2$, and the unit is $\phi_0$, while the ten axioms are exactly those for a bimonoid $A$.    
\end{example} 
\begin{example} 
The quantum category $(C,A)$ with $A$ the identity two-sided comodule $C^{\mathrm{e}}$ is called the {\em chaotic quantum category} on the comonoid $C$.    
\end{example} 
\begin{example} 
Recall from \cite{QCat} that, when $\CV$ is the cartesian monoidal category $\mathrm{Set}$, 
the bicategory $\mathrm{Comod}(\CV)$ is B\'enabou's bicategory $\mathrm{Span}$ of spans \cite{Ben1967} 
made monoidal using the cartesian product of sets.  
It was pointed out in \cite{QCat} that a quantum category $(C,A)$ here is an ordinary (small) category: 
$C$ is the set of objects and $A$ is the span 
$$C\times C \stackrel{(s,t)}\lla A \stackrel{(s,t)}\lra C\times C$$ 
where $s$ and $t$ are the source and target functions. 
Of course, as pointed out in \cite{Ben1967}, a category is more simply a monad $A$ on $C$ in the mere bicategory $\mathrm{Span}$: 
for this $C$ and $A$ are as before as sets but $A$ is the span 
$$C\stackrel{s}\lla A \stackrel{t}\lra C \ .$$ 
However, important for us here is the point of view that categories are also left skew monoidales $(C,A)$ in the monoidal bicategory $\mathrm{Span}$: this time $C$ and $A$ are as before as sets but the tensor product $A$ on $C$ is the span 
$$C\times C \stackrel{(s,t)}\lla A \stackrel{t}\lra C \ .$$         
\end{example} 

Given a morphism $A:C^{\mathrm{e}}\lra C^{\mathrm{e}}$ in $\mathrm{Comod}(\CV)$, we write $A^{\prime}$ for the composite
\begin{equation}
A^{\prime}=(C\stackrel{\varepsilon^{\ast} \otimes 1}\lra C^{\circ}\otimes C \stackrel{A}\lra C^{\circ}\otimes C)
\end{equation}
(where $\varepsilon$ is the counit of the comonoid $C$ and $\varepsilon^{\ast}$ is the right adjoint comodule it determines) 
and then $\bar{A}$ for the composite
\begin{equation}
\bar{A}=(C\otimes C \stackrel{1\otimes A^{\prime}} \lra C\otimes C^{\circ} \otimes C \stackrel{e\otimes 1} \lra C) \ . 
\end{equation}
Notice that, as objects of $\CV$, the comodules $A$, $A^{\prime}$ and $\bar{A}$ are equal. 

\begin{theorem}
There is a bijection between quantum category structures in $\CV$ on $(C,A)$ and left skew-monoidal structures on $C$, with tensor product morphism $\bar{A}$ and unit morphism $\varepsilon^{\ast}$, in $\mathrm{Comod}(\CV)$ using the inverse braiding. 
\end{theorem}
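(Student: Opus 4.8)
The plan is to obtain the theorem as an instance of Theorem~\ref{opmonmonadsasSkew2}, applied to the monoidal bicategory $\mathrm{Comod}(\CV)$. Take $K=C$; its right bidual is then $R=C^{\circ}$, with counit $e:C\otimes C^{\circ}\lra I$ and unit $n:I\lra C^{\circ}\otimes C$, and the canonical monoidale $K^{\mathrm{e}}$ of Section~\ref{2BT} is exactly the monoidale $C^{\mathrm{e}}=C^{\circ}\otimes C$ (with $p=1\otimes e\otimes1$ and $j=n$) on which quantum categories are defined. By definition a quantum category structure on $(C,A)$ is a monoidal comonad on $C^{\mathrm{e}}$; reversing $2$-cells and transposing through the biduality turns this into an opmonoidal monad on $C^{\mathrm{e}}$, and --- since transposing through duals always interacts with the braiding, the same phenomenon lying behind the bimonoid/lax-tricocycloid correspondence of Section~\ref{Fotab} --- the braiding relevant to that opmonoidal monad is the \emph{inverse} braiding of $\CV$. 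By the $\mathrm{co}/\mathrm{rev}$ bookkeeping of Remark~\ref{SMonBicat}, the \emph{right} skew-monoidal structure that Theorem~\ref{opmonmonadsasSkew2} then produces on $K=C$ is the asserted \emph{left} skew-monoidal structure on $C$. So the essential point is that Theorem~\ref{opmonmonadsasSkew2} is available for $\mathrm{Comod}(\CV)$ taken with the inverse braiding, with $K=C$.

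To that end I would verify the hypotheses of Theorem~\ref{opmonmonadsasSkew2}. The requirement that composition on either side preserve the relevant (co)reflexive (co)equalizers in the hom categories is precisely the standing hypothesis on $\CV$ (that $V\otimes-$ preserves coreflexive equalizers), for it is this hypothesis that makes the comodule composite $M\otimes_{D}N$ --- itself a coreflexive equalizer --- behave well under composition; and $C\dashv_{\mathrm b}C^{\circ}$ is the standard biduality in $\mathrm{Comod}(\CV)$. The one ingredient genuinely special to $\mathrm{Comod}(\CV)$ is the opmonadic (=Kleisli-type) morphism $I\lra C^{\circ}$ demanded by the theorem: I would construct it from the counit $\varepsilon:C\lra I$ of the comonoid $C$, the comodule $\varepsilon^{*}$ of the statement being the corresponding right adjoint. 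That this morphism is opmonadic (essentially the comonadicity of $C$-comodules over $\CV$), and that its opmonadicity survives tensoring with objects on either side, follows from the coreflexive-equalizer hypothesis on $\CV$. With $j$ this morphism and $j^{*}=\varepsilon^{*}$, the unit $j_{\circ}=(j^{*}\otimes1)\,n$ supplied by Theorem~\ref{opmonmonadsasSkew2} is the morphism the statement calls $\varepsilon^{*}$, and the skew tensor product it supplies, namely $(e\otimes1)(1\otimes t)(1\otimes j\otimes1):K\otimes K\lra K$, is exactly $\bar A=(e\otimes1)(1\otimes A')$ once $t$ is matched with $A$ and one recalls that $A'=A(\varepsilon^{*}\otimes1)$.

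The step I expect to be the main obstacle is the duality bookkeeping: checking that reversing $2$-cells together with transposing through the biduality carries the ten axioms of a quantum category (three comonad axioms, three making $A$ a monoidal morphism, four making $\varepsilon$ and $\delta$ monoidal $2$-cells) \emph{exactly} onto the data and axioms of an opmonoidal monad on $C^{\mathrm{e}}$ in the inverse-braided $\mathrm{Comod}(\CV)$ --- in particular tracking each point at which a use of the braiding of $\mathrm{Comod}(\CV)$ must be replaced by its inverse --- so that Theorem~\ref{opmonmonadsasSkew2} applies on the nose and not up to some spurious twist. Granting this, the asserted bijection is the composite of these identifications with the equivalence of Theorem~\ref{opmonmonadsasSkew2}, restricted to those opmonoidal monads whose underlying morphism is the prescribed comodule $A$; the resulting skew tensor is $\bar A$ and the resulting unit $\varepsilon^{*}$, as claimed. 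A useful sanity check, which I would carry out, is the case $\CV=\mathrm{Set}$, $\mathrm{Comod}(\CV)=\mathrm{Span}$: there a quantum category on $C$ is an ordinary category with object-set $C$, $\varepsilon^{*}$ is the span of identities, and $\bar A$ is the span $C\times C\stackrel{(s,t)}\lla A\stackrel{t}\lra C$ of Section~\ref{Qc}, recovering precisely the left skew monoidale description of a category recalled there.
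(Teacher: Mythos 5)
Your proposal is correct and takes essentially the same route as the paper: the paper also obtains the theorem by applying Theorem~\ref{opmonmonadsasSkew2} with $\CM=\mathrm{Comod}(\CV)^{\mathrm{co}}$ (your ``reversing $2$-cells''), taking $j$ to be the coopmonadic morphism $I\lra C^{\circ}$ determined by the counit of the comonoid $C^{\circ}$ (the same underlying map as the counit of $C$ that you use), and then noting that right skew monoidales in $\CM$ are left skew monoidales in $\mathrm{Comod}(\CV)$ with the stated tensor $\bar A$ and unit $\varepsilon^{\ast}$. The only substantive difference is that the paper supplements this with an explicit outline of the two constructions (the left fusion $2$-cell giving $\alpha$, and conversely the comonoid structure on $M$ and the comonad generated by $r_{\ast}\dashv r^{\ast}$), which your sketch leaves implicit.
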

\begin{proof}
This is an application of Theorem~\ref{opmonmonadsasSkew2} with 
$$\CM = \mathrm{Comod}(\CV)^{\mathrm{co}} \ .$$
The biduality $C\dashv_{\mathrm{b}}C^{\circ}$ in 
$\mathrm{Comod}(\CV)$ remains as such in $\CM$ and the monoidal comonad 
$A$ on the monoidale $C^{\mathrm{e}}$ in $\mathrm{Comod}(\CV)$ becomes an opmonoidal monad on 
$C^{\mathrm{e}}$ in $\CM$. 
The counit $\varepsilon_{\circ}$ of the comonoid $C^{\circ}$ yields a coopmonadic morphism 
$\varepsilon_{\circ}^* : I\lra C^{\circ}$ in $\mathrm{Comod}(\CV)$. This gives our monadic $j=\varepsilon_{\circ}^* : I\lra C^{\circ}$ in $\CM$ to which we can apply Theorem~\ref{opmonmonadsasSkew2}. The result follows on noting that right skew monoidales in $\CM$ 
are left skew monoidales in $\mathrm{Comod}(\CV)$.     

An outline of the two constructions is as follows.
Start with a quantum category $(C,A)$ in $\CV$. Then we have a monoidal comonad $A: C^{\mathrm{e}} \lra C^{\mathrm{e}}$. The left fusion map \cite{BLV2011, ChLaSt, dlVP} for $A$ is 
\begin{equation}
v^{\ell} = (p\cdot (A\otimes A) \stackrel{p\cdot(1_A\otimes \delta)} \Lra p\cdot (A\otimes A)\cdot (1\otimes A)  \stackrel{\phi_2 \cdot (1\otimes A)} \Lra A \cdot p \cdot (1\otimes A)) \ .
\end{equation}
From the calculations:
\begin{eqnarray*}
\lefteqn{\bar{A}\cdot (\bar{A}\otimes 1_C)}  \\
& \cong & (e\otimes 1)(1 \otimes A)(1 \otimes \varepsilon^{\ast} \otimes 1)(e\otimes 1)(1 \otimes A \otimes 1)(1\otimes \varepsilon^{\ast} \otimes 1 \otimes 1) \\
& \cong & (e\otimes 1)(1 \otimes 1 \otimes e \otimes 1)(1\otimes 1\otimes 1\otimes A)(1 \otimes 1 \otimes 1\otimes \varepsilon^{\ast} \otimes 1) \\ 
&& (1 \otimes A \otimes 1) (1\otimes \varepsilon^{\ast} \otimes 1 \otimes 1) \\
& \cong & (e\otimes 1)(1 \otimes 1 \otimes e \otimes 1)(1\otimes A \otimes A)(1 \otimes 1 \otimes 1\otimes \varepsilon^{\ast} \otimes 1) \\ 
&& (1\otimes \varepsilon^{\ast} \otimes 1 \otimes 1) \\
& \cong & (e\otimes 1)(1 \otimes p)(1\otimes A \otimes A)(1\otimes \varepsilon^{\ast} \otimes 1 \otimes 1\otimes 1) (1 \otimes 1 \otimes \varepsilon^{\ast} \otimes 1)
\end{eqnarray*}
and
\begin{eqnarray*}
\lefteqn{\bar{A}\cdot (1_C \otimes \bar{A})}  \\
& \cong & (e\otimes 1)(1 \otimes A)(1 \otimes \varepsilon^{\ast} \otimes 1)(1\otimes e\otimes 1)(1 \otimes 1 \otimes A)(1\otimes 1 \otimes  \varepsilon^{\ast} \otimes 1) \\
& \cong &  (e\otimes 1)(1 \otimes A)(1\otimes 1\otimes e\otimes 1)(1 \otimes 1 \otimes 1\otimes A) (1\otimes \varepsilon^{\ast} \otimes 1\otimes 1\otimes 1)  \\
&& (1 \otimes 1 \otimes \varepsilon^{\ast} \otimes 1) \\ 
& \cong & (e\otimes 1)(1 \otimes A)(1\otimes p)(1 \otimes 1 \otimes 1\otimes A) (1\otimes \varepsilon^{\ast} \otimes 1\otimes 1\otimes 1)  \\
&& (1 \otimes 1 \otimes \varepsilon^{\ast} \otimes 1)
\end{eqnarray*}
we see that we can whisker the left fusion 2-cell $v^{\ell}$ to define a 2-cell
\begin{equation}
\alpha : \bar{A} \cdot (\bar{A}\otimes 1_C) \Lra {\bar{A}\cdot (1_C \otimes \bar{A})}
\end{equation}
which is our associativity constraint. The right unit constraint 
\begin{equation}
\rho : 1_C \Lra \bar{A} \cdot (1_C \otimes \varepsilon^{\ast})
\end{equation}
is taken to be $\eta:C\lra A$ as a morphism of comodules from $C$ to $C$.
The left unit constraint 
\begin{equation}
\lambda : \bar{A} \cdot (\varepsilon^{\ast} \otimes 1_C) \Lra 1_C
\end{equation}
is taken to be $t:A\lra C$ using the notation of \cite{QCat, Chikh2011}.

Now assume $(C,M,\varepsilon^{\ast}, \alpha,\lambda,\rho)$ is a left skew monoidale in $\mathrm{Comod}(\CV)$. 
So we have $M:C\otimes C \lra C$, $\alpha : M\cdot (M\otimes 1_C) \Lra M\cdot (1_C \otimes M)$, $\lambda :M_1\Lra 1_C$, $\rho :1_C \Lra  M_2$ where 
$$ M_1 = (C\stackrel{\varepsilon^{\ast}\otimes1}\lra C\otimes C\stackrel{M} \lra C)$$ 
and
$$ M_2 = (C\stackrel{1_C\otimes \varepsilon^{\ast}}\lra C\otimes C\stackrel{M} \lra C) \ .$$
As objects of $\CV$, we have $M_1=M_2=M$. 
As objects of $\CV$, we also have the {\em composable pair object} 
$$M\cdot(M\otimes 1) \cong M_2\otimes_C M_2=:P$$ and the {\em cospan object}  
$$M\cdot(1\otimes M) \cong M_2\otimes_C M_1=:\tilde{P} \ .$$       
We define 
$$\delta:=(M\stackrel{\rho \otimes 1} \lra P \stackrel{\alpha} \lra \tilde{P} \stackrel{\mathrm{equ}} \lra M\otimes M)$$
(where $\mathrm{equ}$ is the defining equalizer for composition in $\mathrm{Comod}(\CV)$) which makes $M$ a comonoid with counit
$$\varepsilon : = (M\stackrel{\lambda} \lra C \stackrel{\varepsilon} \lra I) \ .$$
The coaction of $M_1$ gives
$$r:=(M \stackrel{\delta} \lra M\otimes C^{\circ}\otimes C \stackrel{\varepsilon \otimes 1\otimes 1} \lra C^{\circ}\otimes C)$$
which turns out to be a comonoid morphism. 
The adjunction $r_{\ast} \dashv r^{\ast}$ then generates our comonad 
$$A : C^{\circ}\otimes C \lra C^{\circ}\otimes C$$
on $C^{\mathrm{e}}$ in $\mathrm{Comod}(\CV)$. 
Notice that $A = M$ as objects of $\CV$.
The monoidal structure on $A$ is defined by
$$\mu : = (P \stackrel{\alpha} \lra \tilde{P} \stackrel{\mathrm{equ}} \lra M\otimes M \stackrel{1\otimes \varepsilon} \lra M)$$ and
$$\eta : = (C\stackrel{\rho}\lra M) \ .$$

\end{proof}

% \begin{center}
% --------------------------------------------------------
% \end{center}

\bibliographystyle{plain}

\end{document}